\newtheorem{theorem}{Theorem}[section]
\newtheorem{lemma}[theorem]{Lemma}
\newtheorem{corollary}[theorem]{Corollary}
\newtheorem{prop}[theorem]{Proposition}
\newtheorem{claim}[theorem]{Claim}
\newcommand{\xrightarrow}[1]{\displaystyle\mathop{\hbox to 1cm{\rightarrowfill}}_{#1}}
\newcommand{\underset}[2]{\displaystyle\mathop{#2}_{#1}}
\newcommand{\pp}{{\mathbb P}}
\newcommand{\zz}{{\mathbb Z}}
\newcommand{\nn}{{\mathbb N}}
\newcommand{\rr}{{\mathbb R}}
\newcommand{\veps}{\varepsilon}
\newcommand{\Z}{{\mathbb Z}}
\newcommand{\N}{{\mathbb N}}
\newcommand{\E}{{\mathbb E}}
\newcommand{\R}{{\mathbb R}}
\renewcommand{\P}{{\mathbb P}}
\newcommand{\cA}{{\mathcal A}}
\newcommand{\cC}{{\mathcal C}}
\newcommand{\cF}{{\mathcal F}}
\newcommand{\cG}{{\mathcal G}}
\newcommand{\cZ}{{\mathcal Z}}
\newcommand{\cT}{{\mathcal T}}
\newcommand{\cU}{{\mathcal U}}
\newcommand{\cY}{{\mathcal Y}}
\newcommand{\cV}{{\mathcal V}}
\newcommand{\cR}{{\mathcal R}}
\newcommand{\cE}{{\mathcal E}}
\newcommand{\cP}{{\mathcal P}}
\newcommand{\cS}{{\mathcal S}}
\newcommand{\eps}{\varepsilon}
\newcommand{\bT}{{\mathbf T}}
\newcommand{\bl}{{\mathbf l}}
\newcommand{\bV}{{\mathbf V}}
\newcommand{\bx}{{\mathbf x}}
\newcommand{\by}{{\mathbf y}}
\newcommand{\bG}{{\mathbf G}}
\newcommand{\BB}{\mbox{BB}}
\newcommand{\IIC}{\mbox{IIC}}
\newcommand{\IPC}{\mbox{IPC}}
\newcommand{\Bin}{\operatorname{Bin}}
\begin{document}
\begin{frontmatter}

\title{Scaling limit of the invasion percolation cluster on a regular tree}
\runtitle{Scaling limit of invasion percolation}

\begin{aug}
\author[A]{\fnms{Omer} \snm{Angel}\thanksref{OAmark}\ead[label=OAemail]{angel@math.ubc.ca}},
\author[B]{\fnms{Jesse} \snm{Goodman}\corref{}\thanksref{OAmark}\ead[label=JGemail]{goodmanja@math.leidenuniv.nl}}
\and
\author[C]{\fnms{Mathieu} \snm{Merle}\thanksref{MMmark}\ead[label=MMemail]{mathieu.merle@math.univ-paris-diderot.fr}}
\runauthor{O. Angel, J. Goodman and M. Merle}
\affiliation{University of British Columbia, Univertiteit Leiden and
Universit\'e~Paris~Diderot}
\address[A]{O. Angel\\
Department of Mathematics\\
University of British Columbia\\
1984 Mathematics Road\\
Vancouver, British Columbia\\
V6T 1Z2\\
Canada\\
\printead{OAemail}}
\address[B]{J. Goodman\\
Mathematisch Instituut\\
Universiteit Leiden\\
PO Box 9512, 2300 RA Leiden\\
Netherlands\\
\printead{JGemail}}
\address[C]{M. Merle\\
Universit\'e Paris Diderot\\
175, rue du Chevaleret\\
75013 Paris\\
France\\
\printead{MMemail}} 
\end{aug}

\thankstext{OAmark}{Supported in part by NSERC.}

\thankstext{MMmark}{Supported in part by the Pacific Institute for the
Mathematical Sciences.}

\received{\smonth{10} \syear{2009}}
\revised{\smonth{10} \syear{2011}}

%
\begin{abstract}
We prove existence of the scaling limit of the invasion percolation
cluster (IPC) on a regular tree. The limit is a random real tree with a
single end. The contour and height functions of the limit are described
as certain diffusive stochastic processes.

This convergence allows us to recover and make precise certain
asymptotic results for the IPC. In particular, we relate the limit of
the rescaled level sets of the IPC to the local time of the scaled height
function.
\end{abstract}

%
\begin{keyword}[class=AMS]
\kwd{60K35}
\kwd{82B43}.
\end{keyword}
\begin{keyword}
\kwd{Invasion percolation}
\kwd{scaling limit}
\kwd{real tree}.
\end{keyword}

\end{frontmatter}

\section{Introduction and main results}\label{intro}

Invasion percolation on an infinite connected graph is a random growth
model which is closely related to critical percolation, and is a prime
example of self-organized criticality. It was introduced in the
eighties by
Wilkinson and Willemsen~\cite{WilkWillem1983} and first studied on the
regular tree by Nickel and Wilkinson~\cite{NickWilk1983}. %
The relation between invasion
percolation and critical percolation has been studied by many authors (see,
e.g.,~\cite{CCN1985,Jarai2003}). 
More recently, Angel, Goodman, den
Hollander and Slade~\cite{AGdHS2008}
have given a structural representation
of the invasion percolation cluster on a regular tree, and used it to
compute the scaling limits of various quantities related to the IPC
such as
the distribution of the number of invaded vertices at a given level of the
tree.

Fixing a degree $\sigma\ge2$, we consider $\cT=\cT_\sigma$: the rooted
regular tree with index~$\sigma$, that is, the rooted tree where every
vertex has $\sigma$ children. Invasion percolation on $\cT$ is defined
as follows: edges of $\cT$ are assigned weights which are i.i.d. and
uniform on $[0,1]$. The invasion percolation cluster on $\cT$, denoted
IPC, is grown inductively starting from a subgraph $I_0$ consisting of
the root $\varnothing$ of $\cT$. At each step $I_{n+1}$ consists of
$I_n$ together with the edge of minimal weight in the boundary of
$I_n$. The invasion percolation cluster IPC is the limit $\bigcup I_n$.

\subsection{Convergence of trees}

We consider the IPC as a metric space with respect to
graph distance $d_{\mathrm{gr}}$. Since IPC is already infinite, taking
its scaling limit amounts to replacing $d_{\mathrm{gr}}$ by $\frac
{1}{k}d_{\mathrm{gr}}$.
%
\begin{theorem}\label{Tlimexist}
The rescaled rooted invasion percolation cluster $(\mbox{IPC},\break\frac{1}{k}d_{\mathrm{gr}}$,
$\varnothing)$ has a scaling limit w.r.t. the pointed
Gromov--Hausdorff topology,
which is a random $\R$-tree.
\end{theorem}

Here, an $\R$-tree means a topological space with a unique rectifiable
simple path between any two points. Note that, because the IPC is
infinite, we must work with the pointed Gromov--Hausdorff topology (see,
e.g.,~\cite{Munn2010}, Section~2). For present purposes this
means we must show that, for each $R>0$, the ball $\{v\in\mathrm
{IPC}\dvtx \frac{1}{k}d_{\mathrm{gr}}(\varnothing, x)\leq R\}$ about
the root
converges in the Gromov--Hausdorff sense.

A key point in our study is that the contour function (as well as height
function and Lukaciewicz path, see Section~\ref{subtrees} below) of
an infinite tree does not generally encode
the entire tree. If the various encodings of trees are applied to infinite
trees, they describe only the part of the tree to the left of the leftmost
infinite branch. We present two ways to overcome this difficulty. Both are
based on the fact (see~\cite{AGdHS2008})
that the IPC has a.s. a unique
infinite branch. Following Aldous~\cite{Aldous1991}, we define a
\textit{sin-tree} to be an
infinite one-ended tree (i.e., with a single infinite branch).

The first approach is to use the symmetry of the underlying graph $\cT
$ and
observe that the infinite branch of the IPC (called the \textit{backbone}) is
independent of the metric structure of the IPC. Thus, for all purposes
involving only the metric structure of the IPC, we may as well assume (or
condition) that the backbone is the rightmost branch of $\cT$. We
denote by
$\cR$ the IPC under this condition. The various encodings of $\cR$ encode
the entire tree.

The second approach is to consider a pair of encodings, one for the
part of
the tree to the left of the backbone, and a second encoding the part to the
right of the backbone. This is done by considering also the encoding of the
reflected tree $\overline{\IPC}$. The reflection of a plane tree is
defined to
be the same tree with the reversed order for the children of each vertex.
The uniqueness of the backbone implies that together the two encodings
determine the entire IPC.

In order to describe the limits, we first define the process $L(t)$
which is
the lower envelope of a Poisson process on $(\R^+)^2$. Given a Poisson
process $\cP$ of intensity 1 in the quarter plane, $L(t)$ is defined by
\[
L(t) = \inf\{ y\dvtx (x,y)\in\cP\mbox{ and } x\le t\}.
\]

Our other results describe the scaling limits of the various encodings
of the
trees in terms of solutions of
{\renewcommand{\theequation}{$\cE(L)$}
\begin{equation} \label{eqSDE}
Y_t = B_t - \int_0^t L( -\underline{Y}_{ s} ) \,ds,
\end{equation}}

\noindent where $\underline{Y}_{  s}=\inf_{0\leq u\leq s} Y_u$ is the infimum
process of $Y$ and $B_t$ is a standard Brownian motion. The reason for the
notation is that we also consider solutions of equations $\cE(L/2)$ where,
in the above, $L$ is replaced by $L/2$. Note that by the scale
invariance of
the Poisson process, $k L(kt)$ has the same law as
$L(t)$. Hence, the scaling of Brownian motion implies that the solution
$Y$ has Brownian scaling as well.

We work primarily in the space $\cC(\R^+,\R^+)$ of continuous functions
from $\R^+$ to itself with the topology of locally uniform
convergence. We
consider three well known and closely related encodings of plane trees,
namely, the Lukaciewicz path, and the contour and height functions (all are
defined in Section~\ref{subenc1} below). The three are closely
related and, indeed, their scaling
limits are almost the same. The reason for the triplication is that the
contour function is the simplest and most direct encoding of a plane tree,
whereas the Lukaciewicz path turns out to be easier to deal with in
practice. The height function is a middle ground.

%
\begin{theorem}\label{Trightscale}
For the IPC conditioned on the backbone being on the right, let $V_\cR
$, $H_\cR$ and $C_\cR$ denote its Lukaciewicz path, height function and
contour function, respectively.
Then we have the following weak limits in $\cC(\R^+,\R)$:
%
\setcounter{equation}{0}
\begin{eqnarray}
\label{equ1}
(k^{-1} V_\cR(k^2 t))_{t\ge0} &\to&
\bigl(\gamma^{1/2} (Y_{t}-\underline{Y}_{  t})\bigr)_{t\ge0}, \\
(k^{-1} H_\cR(k^2 t))_{t\ge0} &\to&
\bigl(\gamma^{-1/2} (2Y_{t}-3 \underline{Y}_{  t})\bigr)_{t\ge0}, \\
(k^{-1} C_\cR(2k^2 t))_{t\ge0} &\to&
\bigl(\gamma^{-1/2} (2Y_{t}-3 \underline{Y}_{  t})\bigr)_{t\ge0}
\end{eqnarray}
as $k\to\infty$, where
\[
\gamma= \frac{\sigma-1}{\sigma}
\]
and $(Y_t)_{t\ge0}$ is the solution of (\ref{eqSDE}) (and is the
same solution in all three limits).
\end{theorem}

To put this theorem into context, recall that the Lukaciewicz path of a
critical Galton--Watson tree is an excursion of random walk with
i.i.d. steps. From this it follows that the path of an infinite sequence
of critical trees scales to Brownian motion. The height and contour
functions of the sequence are easily expressed in terms of the Lukaciewicz
path and, assuming the branching law has\vadjust{\goodbreak} second moments, are seen to
scale to reflected Brownian motion (cf. Le Gall
\cite{LeGall2005}).
Duquesne and Le Gall generalized this approach in~\cite{DuqLeG2002},
and showed that the genealogical structure of a continuous-state branching
process is similarly coded by a height process which can be expressed in
terms of a L\'evy process, and that this is also the limit of various
Galton--Watson trees with heavy tails.

The case of sin-trees is considered by Duquesne~\cite{Duquesne2005}
to study
the scaling limit of the range of a random walk on a regular tree. His
techniques suffice for analysis of the IIC, but the IPC requires additional
ideas, the key difficulty being that the Lukaciewicz path is no longer a
Markov process. The scaling limit of the IIC turns out to be an
illustrative special case of our results, and we will describe its scaling
limit as well (in Section~\ref{secIIC}).

For the unconditioned IPC we define its left part $\IPC_{G}$ to be the subtree
consisting of the backbone and all vertices to its left. The right part
$\IPC_{D}$ is
defined as the left part of the reflected IPC. We can now define
$V_G$ and $V_D$ to be, respectively, the Lukaciewicz paths for the left and
right parts of the IPC, and similarly define $H_G,H_D,C_G,C_D$ (see
also Section~\ref{subenc2} below).
%
\begin{theorem}\label{Ttwosidedscale}
We have the following weak limits in $\cC(\R^+,\R)$:
%
\begin{eqnarray}
k^{-1}(V_G(k^2 t), V_D(k^2 t))_{t\ge0} &\to&
\gamma^{1/2}(Y_{t}-\underline{Y}_{ t}, \widetilde Y_{t}-\underline
{\widetilde Y}_{ t})_{t\ge0},\\
k^{-1}(H_G(k^2 t), H_D(k^2 t))_{t\ge0} &\to&
\gamma^{-1/2}(Y_{t}-2 \underline{Y}_{ t}, \widetilde Y_{t}-2
\underline{\widetilde Y}_{ t})_{t\ge0},\\
\label{eqtwosidedscalecontour}
k^{-1}(C_G(2k^2 t), C_D(2k^2 t))_{t\ge0} &\to&
\gamma^{-1/2}(Y_{t}-2 \underline{Y}_{ t}, \widetilde Y_{t}-2
\underline{\widetilde Y}_{ t})_{t\ge0}
\end{eqnarray}
as $k\to\infty$, where $(Y_t)_{t\ge0}$ and $(\widetilde
{Y}_t)_{t\ge0}$ are
\textit{independent} solutions of $\cE(L/2)$.
\end{theorem}

\subsection{Level sizes and volumes}

From the convergence results above we can establish
asymptotics for level sizes and volumes in the invasion percolation
cluster. In~\cite{AGdHS2008},
it was proved that the size of the $n$th level
of the IPC, rescaled by a factor $n$, converges to a nondegenerate
limit. Similarly, the volume up to level $n$, rescaled by a factor $n^2$,
converges to a nondegenerate limit. The Laplace transforms of these limits
were expressed as functions of the $L$-process. However, formulas
(1.20)--(1.23) of~\cite{AGdHS2008}
do not provide insight into the limiting variables.
With our convergence theorem for height functions of $\cR$, we can
express the limit in terms of the continuous
limiting height function.

For $x \in\rr_+$ we denote by $C[x]$ the number of vertices of the
IPC at height~$[x]$. We let $C[0,x]=\sum_{i=0}^{[x]} C[i]$ denote
the number of vertices of the IPC up to height~$[x]$. Write $H_t =
\gamma^{-1/2}(2Y_t - 3\underline{Y}_t)$ for the limit of $H_\cR$ in
Theorem~\ref{Trightscale}, and $l_\infty^a(H)$ for the standard
local time
at level $a$ of $H$.
%
\begin{theorem}\label{Tlevels-volume}
For every $a>0$ we have the distributional limits
%
\begin{equation}\label{eqvolume}
\frac{1}{n^2} C[0,an] \xrightarrow{n \to\infty}
\int_{0}^{\infty} \mathbf{1}_{[0,a]}(H_s) \,ds\vadjust{\goodbreak}
\end{equation}
and
%
\begin{equation}\label{eqlevels}
\frac{1}{n} C[an] \xrightarrow{n \to\infty}
\frac{\gamma}{4} l_{\infty}^a(H).
\end{equation}
\end{theorem}

In the case of the asymptotics of the levels, we also provide an
alternative way of expressing the limit directly as a sum of
independent variables. Write $\mathbf{e}\{c\}$ for an exponential
variable of rate $c$.
%
\begin{theorem}\label{Tlevels}
Let $S$ be a point process such that, conditioned on the $L$-process, $S$
is an inhomogeneous Poisson point process on $[0, a \sqrt{\gamma}]$, with
intensity
\[
\frac{2 L(s)\,ds} {\exp( (a \sqrt{\gamma} - s)
L(s) ) - 1}.
\]
Then, conditionally on $L$, and in distribution,
%
\begin{equation}\label{eqsumofexp}
\frac{1}{n} C[an] \xrightarrow{n\to\infty}
\frac{\sqrt{\gamma}}{2} \sum_{s\in S} \mathbf{e}\biggl\{ \frac
{L(s)} {1-
\exp(-(a \sqrt{\gamma} -s)L(s))} \biggr\},
\end{equation}
where the terms in the sum are independent.
\end{theorem}

From this representation and properties of the $L$-process, it is
straightforward to recover the representation of the asymptotic Laplace
transform of level sizes, (1.21) of~\cite{AGdHS2008}.
Also, as the proof of the theorem will show, a.s. only a finite number
of distinct values of $L$ contribute to the sum in
(\ref{eqsumofexp}).

\subsection{Application to the incipient infinite cluster}

The proofs of Theorems~\ref{Tlimexist}--\ref{Tlevels} also apply to
the \textit{incipient infinite cluster} (IIC), whose structure and
similarity to the IPC we outline in Section~\ref{subIICrecall}.
Stated briefly, the IIC corresponds to the IPC in the simpler case
where the process $L(t)$ is replaced by $0$. As a consequence, some
elements of the proofs (such as the right-grafting constructions in
Section~\ref{secproofmain}) are not needed to handle the IIC. For
comparison, we summarize the results for the IIC in the following
theorems.\looseness=-1
%
\begin{theorem}\label{TIICresults1}
The rescaled rooted incipient infinite cluster $(\mbox{IIC},\frac
{1}{k}d_{\mathrm{gr}},\varnothing)$ has a scaling limit w.r.t. the pointed
Gromov--Hausdorff topology,
which is a random $\R$-tree.

For the IIC conditioned on the backbone being on the right, let $V_\cR
^{\mathrm{IIC}}$, $H_\cR^{\mathrm{IIC}}$ and $C_\cR^{\mathrm{IIC}}$
denote its Lukaciewicz path, height function and contour function, respectively.
Then we have the following weak limits in $\cC(\R^+,\R)$:
%
\begin{eqnarray}
\label{eLukaIICRlimit}
(k^{-1} V_\cR^{\mathrm{IIC}}(k^2 t))_{t\ge0} &\to&
\bigl(\gamma^{1/2} (B_{t}-\underline{B}_{  t})\bigr)_{t\ge0}, \\
\label{eHeightIICRlimit}
(k^{-1} H_\cR^{\mathrm{IIC}}(k^2 t))_{t\ge0} &\to&
\bigl(\gamma^{-1/2} (2B_{t}-3 \underline{B}_{  t})\bigr)_{t\ge0},
\\
\label{eContourIICRlimit}
(k^{-1} C_\cR^{\mathrm{IIC}}(2k^2 t))_{t\ge0} &\to&
\bigl(\gamma^{-1/2} (2B_{t}-3 \underline{B}_{  t})\bigr)_{t\ge0}
\end{eqnarray}
as $k\to\infty$, where $B_t$ is a standard Brownian motion.\vadjust{\goodbreak}

For the IIC with unconditioned backbone, the Lukaciewicz paths, height
functions and contour functions of its left and right parts have the
following weak limits in $\cC(\R^+,\R)$:
%
\begin{eqnarray}
\label{eLukaIIClimit}
k^{-1}(V_G^{\mathrm{IIC}}(k^2 t), V_D^{\mathrm{IIC}}(k^2 t))_{t\ge
0} &\to&
\gamma^{1/2}(B_{t}-\underline{B}_{ t}, \widetilde B_{t}-\underline
{\widetilde B}_{ t})_{t\ge0}, \\
\label{eHeightIIClimit}
k^{-1}(H_G^{\mathrm{IIC}}(k^2 t), H_D^{\mathrm{IIC}}(k^2 t))_{t\ge
0} &\to&
\gamma^{-1/2}(B_{t}-2 \underline{B}_{ t}, \widetilde B_{t}-2
\underline{\widetilde B}_{ t})_{t\ge0}, \\
\label{eContourIIClimit}
k^{-1}(C_G^{\mathrm{IIC}}(2k^2 t), C_D^{\mathrm{IIC}}(2k^2 t))_{t\ge
0} &\to&
\gamma^{-1/2}(B_{t}-2 \underline{B}_{ t}, \widetilde B_{t}-2
\underline{\widetilde B}_{ t})_{t\ge0}
\end{eqnarray}
as $k\to\infty$, where $B_t$ and $\widetilde B_t$ are
\textit{independent} Brownian motions.
\end{theorem}

Note that up to constant factors, the scaling limits in (\ref
{eLukaIICRlimit}) and (\ref{eLukaIIClimit}) are reflected Brownian
motions, while the scaling limits in (\ref{eHeightIIClimit}) and (\ref
{eContourIIClimit}) are three-dimensional Bessel processes. The scaling
limit in (\ref{eHeightIICRlimit}) and (\ref{eContourIICRlimit}),
however, is not a standard process.
%
\begin{theorem}\label{TIICresults2}
Write $H_t^{\mathrm{IIC}} = \gamma^{-1/2}(2Y_t - 3\underline{Y}_t)$ for
the limit of $H_\cR^{\mathrm{IIC}}$ in (\ref{eHeightIICRlimit}), and
$l_\infty^a(H)$ for the standard local time at level $a$ of $H$. Then
for every $a>0$ we have the distributional limits
%
\begin{equation}\label{eqIICvolume}
\frac{1}{n^2} C[0,an] \xrightarrow{n \to\infty}
\int_{0}^{\infty} \mathbf{1}_{[0,a]}(H_s) \,ds
\end{equation}
and
%
\begin{equation}\label{eqIIClevels}
\frac{1}{n} C[an] \xrightarrow{n \to\infty}
\frac{\gamma}{4} l_{\infty}^a(H).
\end{equation}
Moreover, if $S^{\mathrm{IIC}}$ is an inhomogeneous Poisson point
process on $[0,a\sqrt{\gamma}]$ with intensity $2(a\sqrt{\gamma
}-s)^{-1}\,ds$, then
%
\begin{equation}
\frac{1}{n}C[an]\xrightarrow{n\to\infty}
\frac{\sqrt{\gamma}}{2} \sum_{s\in S} \mathbf{e}\bigl\{ \bigl(a\sqrt
{\gamma
}-s\bigr)^{-1} \bigr\}
\end{equation}
in distribution, where the terms in the sum are independent.
\end{theorem}

\section{Background and overview}

\subsection{Structure of the IPC}
\label{subrecall}

We now give a brief overview of the IPC structure theorem from \cite
{AGdHS2008},
which is the basis for the present work. First of all, the IPC
contains a single infinite branch, called the backbone and denoted
$\BB$. The backbone is a uniformly random branch in the tree (in the
natural sense). From the backbone emerge, at every height $n$ and on
every edge
away from the backbone, subcritical percolation clusters
with parameter $\widehat W_n< p_c=\sigma^{-1}$.

The parameters $\widehat W_n$ are nondecreasing and satisfy $\widehat
W_n\xrightarrow{n\to\infty} p_c$. Moreover, $(\widehat
W_n)_{n=0}^\infty
$ forms a Markov chain with dynamics\vadjust{\goodbreak} of the following kind. The initial
value $\widehat W_0$ is distributed\vspace*{1pt} on $[0,p_c]$ according to a certain
density function $f$. Given $\widehat W_n=\widehat w$, the next value
$\widehat W_{n+1}$ is, with probability $g(\widehat w)$, a new value
chosen according to the density $f$ conditioned to be larger than
$\widehat w$; or else, with probability $g(\widehat W_n)$, the value
$\widehat w$. For our purposes, it will suffice to know that the
functions $f$ and $g$ satisfy
%
\begin{equation}\label{eRateAsymps}
\lim_{\widehat w\nearrow p_c}f(\widehat w)>0,\qquad
g(\widehat w)\sim\sigma(p_c-\widehat w)=1-\sigma\widehat w
\end{equation}
as $\widehat w\nearrow p_c$. (These asymptotics follow from
\cite{AGdHS2008}, Sections 2.1.2 and 3.1, since $(\widehat
W_n)_{n=0}^\infty$ is the image of the Markov chain
$(W_n)_{n=0}^\infty$ under $w\mapsto \widehat w$.)

%
%
We will primarily be concerned with the scaling limit of $\widehat
W_n$, which
is given by the lower envelope process $L(t)$ defined above. Writing
$[x]$ for the integer part of~$x$, we have, for any
$\eps>0$,
%
\begin{equation}\label{eWtoL}
%
\bigl(k\bigl(1 - \sigma\widehat W_{[kt]}\bigr)\bigr)_{t \ge\eps}
\xrightarrow{k\to\infty}
(L(t))_{t \ge\eps}
\end{equation}
with respect to the Skorohod topology (see~\cite{AGdHS2008},
Proposition 3.3 and Corollary~3.4). Indeed, $L(t)$ is the continuous-time
process that jumps, at rate $L(t)$, to a value uniformly chosen between
$0$ and $L(t)$; this reflects the asymptotics given in (\ref{eRateAsymps}).

The process $L_t$ diverges as $t\to0$, which somewhat complicates the study
of the IPC close to the root.

\subsection{Structure of the IIC}
\label{subIICrecall}

The \textit{incipient infinite cluster} (IIC) embodies the notion of a
percolation cluster that is both critical and infinite. It was
originally defined and discussed by Kesten~\cite{Kesten1986} (see also
\cite{BarKum2006}). The IIC can be obtained through a variety of
limiting constructions---for instance, by conditioning a critical
percolation cluster to extend at least distance $R$ and sending $R\to
\infty$, or by examining the neighborhood of a faraway point in the IPC
(see~\cite{Jarai2003} and~\cite{AGdHS2008}, Theorem 1.2). In the
present context, we note that the IIC on a regular tree has a structure
similar to the IPC; see~\cite{AGdHS2008}, Section~2.1.

Specifically, the IIC
contains a single infinite branch, the backbone, which is a uniformly
random branch in the tree. From the backbone emerge, at every height
and on every edge
away from the backbone, \textit{critical} percolation clusters.

Note that setting $\widehat W_n\equiv p_c$ in the above description
gives rise to
the IIC, on the one hand, while in the scaling limit $L$ is replaced by 0.
This enables us to use a common framework for both clusters.

The convergence $\widehat W_n\xrightarrow{n\to\infty} p_c$ explains
why the
IPC and IIC resemble each other far above the root. However, the analysis
of~\cite{AGdHS2008}
shows that the convergence of the parameter of the
attached clusters is slow enough that $r$-point functions and other
measurable quantities such as level sizes possess different scaling limits.

\subsection{Encodings of finite trees} \label{subenc1}

For completeness we include here the definition of the various tree
encodings we are concerned with. We refer to Le Gall~\cite{LeGall2005}
for further
details in the case of finite trees and to Duquesne~\cite{Duquesne2005}
in the
case of sin-trees discussed below.

A \textit{rooted plane tree} $\theta$ (also called an ordered tree) is a tree
with a description as follows. Vertices of $\theta$ belong to $\bigcup_{n
\geq0} \N^n$. By convention, $\varnothing\in\N^0$ is always a
vertex of
$\theta$ which is called the root. For a vertex $v \in\theta$, we
let $k_v
= k_v(\theta)$ be the number of children of $v$ and whenever $k_v = k >0$,
these children are denoted $v1,\ldots,vk$. In particular, the $i$th
child of the root is simply $i$, and if $vi \in\theta$, then $\forall
1\le
j<i$, $vj \in\theta$ as well. Edges of $\theta$ are the edges $(v,vi)$
whenever $vi\in\theta$. Note that the set of edges of $\theta$ are
determined by the set of vertices and vice-versa, which allows us to blur
the distinction between a tree and its set of vertices. The \textit{$k$th
generation} of a tree contains every vertex $v \in\theta\cap\N^k$, so
that the $0$th generation consists exactly of the root. Define $\#
\theta
$ to
be the total number of vertices in $\theta$.

Let $(v^i)_{0\le i < \#\theta}$ be the vertices of $\theta$ listed
in lexicographic order, so that $v^0=\varnothing$. The \textit{Lukaciewicz
path} $V$ of $\theta$ (sometimes known as the depth-first path) is the
continuous function $(V_t = V^\theta_t, t
\in[0, \# \theta])$ defined as follows: for
$n\in\{1,\ldots,\#\theta\}$
\[
V_n = V^\theta_n:= \sum_{i=0}^{n-1} (k_{v^i}-1 ),
\]
and between integers $V$ is interpolated linearly.\setcounter{footnote}{2}\footnote{In
\cite{LeGall2005,DuqLeG2002},
the Lukaciewicz path is defined as a piecewise constant,
discontinuous function, but there the case when the scaling limit of this
path is discontinuous is also treated. Note that only the values of $V_n,
n \in\{1,\ldots,\#\theta\}$, are needed to recover the tree $\theta$.
Moreover, in our case, $\sup_{t \ge0} \vert V_{t+1}-V_t\vert$ is
bounded by
$\sigma$, so that the eventual scaling limit will be continuous. The
advantage of our convention is that it allows us to consider locally
uniform convergence of the rescaled Lukaciewicz paths in a space of
continuous functions.}

The values $V_n$ are also given by the following \textit{right-hand
description of
the Lukaciewicz path}. This description is simpler to visualize, though
we do not know of a reference for it. For $v \in\theta$, consider the
subtree $\theta^v \subset\theta$ formed by all the vertices which are
smaller or equal to $v$ in the lexicographic order. Let $n(v,\theta)$ be
the number of edges connecting vertices of $\theta^v$ with vertices of
$\theta\setminus\theta^v$. Then
\[
V(k) = n(v^k,\theta) - 1.
\]
The reason we call this the right-hand description is that $n(v,\theta
)$ is also
the number of edges attached on the right-hand side of the path from
$\varnothing$
to $v$. It is straightforward to check that this description is consistent
with other definitions.

The height function is the second encoding we wish to consider. We also
define it to be a piecewise linear function\footnote{Again, in \cite
{LeGall2005},
the height function of a nondegenerate tree is discontinuous.} with
$H(k)$ the height of $v^k$ above the root. It is related to the Lukaciewicz
path by
%
\begin{equation}\label{VtoH}
H(n) = \# \bigl\{ k<n\dvtx V_k = \min\{ V_k,\ldots,V_n \} \bigr\}.
\end{equation}

Finally, the contour function of $\theta$ is obtained by considering a
walker exploring $\theta$ at constant unit speed, starting from the
root at
time 0, and going from left to right. Each edge is traversed twice
(once on
each side), so that the total time before returning to the root is
$2(\#\theta-1)$. The value $C^\theta(t)$ of the contour function at
time $t
\in[0, 2(\# \theta-1)]$ is the distance between the walker and the root
at time $t$.

It is straightforward to check that the Lukaciewicz path, height function
and contour function each uniquely determine---and hence represent---any
finite tree~$\theta$. Figure~\ref{figencoding} illustrates these
definitions, as
they are easier to understand from a picture.

%
\begin{figure}

\includegraphics{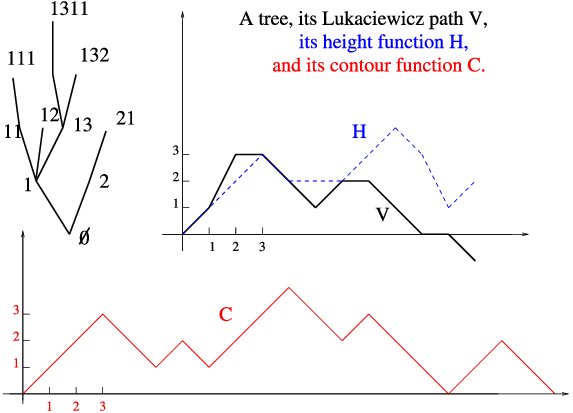}
\caption{A finite tree and its encodings.}
\label{figencoding}\label{figgraphVHC}
\end{figure}

At times it is useful to encode a sequence of finite trees by a single
function. This is done by concatenating the Lukaciewicz paths or height
function of the trees of the sequence. Note that when coding a sequence of
trees, jumping from one tree to the next corresponds to reaching a new
integer infimum in the Lukaciewicz path, while it corresponds to a
visit to 0 in
the height process.

\subsection{Encoding sin-trees} \label{subenc2}

While the definitions of Lukaciewicz path, and height and contour functions,
extend immediately to infinite (discrete) trees, these paths generally no
longer encode a unique infinite tree. For example, all the trees containing
the infinite branch $\{\varnothing, 1, 11, 111,\ldots\}$ would have the
identity function for height function, so that equal paths correspond to
distinct infinite trees. In fact, the only part of an infinite tree which
one can recover from the the height and contour functions is the subtree
that lies left of the leftmost infinite branch. The Lukaciewicz path
encodes additionally the degrees of vertices along the leftmost infinite
branch.

However, if we restrict the encodings to the class of trees whose only
infinite branch is the rightmost branch, then the three encodings still
correspond to unique trees. In particular, observe that $\IPC_G$ and
$\cR$ are fully encoded by their Lukaciewicz paths (as well as by their
height, or contour functions). That is the reason we begin our discussion
with these conditioned objects.

Not surprisingly, it is possible to encode any sin-tree, such as the IIC
and IPC, by using \textit{two} coding paths, one for the part of the tree
lying to the left of the backbone, and one for the part lying to its right.
More precisely, suppose $\cT$ is a sin-tree, and $\BB$ denotes its
backbone. The left tree is defined as the set of
all vertices on or to the left of the backbone:
\[
\cT_{G}:= \bigcup_{v\in\mathrm{BB}} \cT^v = \{ x \in\cT\dvtx \exists
v \in
\BB,
x\le v\}.
\]
We do not define the right-tree of $\cT$ as the set of vertices which lie
on or to the right of the backbone. Rather, in light of the way the
encodings are defined, it is easier to work with the mirror-image of
$\cT$,
denoted $\overline{\cT}$ and defined as follows: since a plane tree
is a
tree where the children of each vertex are ordered, $\overline{\cT}$
may be
defined as the same tree but with the reverse order on the children at each
vertex. We then define
\[
\cT_D = (\overline{\cT})_G.
\]

Obviously, only the rightmost branches of $\cT_G, \cT_D$ are
infinite, so
the Lukaciewicz paths $V_G,V_D$, of $\cT_G,\cT_D$, do encode uniquely each
of these two trees (and so do the height functions $H_G, H_D$ and the
contour functions $C_G, C_G$). Therefore, the pair of paths $(V_G,V_D)$
encodes $\cT$ [and so do the pairs $(H_G,H_D)$, $(C_G,C_D)$]. Note that
$H_G, C_G$ are also, respectively, the height and contour functions of
$\cT$
itself, while $H_D,C_D$ are, respectively, the height and contour functions
of $\overline{\cT}$.

\subsection{Overview} \label{subheuristics}

Let us try to give briefly, and heuristically, some intuition of why
Theorem~\ref{Trightscale} holds. For $t>0$, the tree emerging from
$\BB_{[kt]}$
is coded by the $[kt]$th excursion of $V$ above $0$. Except for its first
step, this excursion has the same transition probabilities as a random walk
with drift $\sigma\widehat{W}_{[kt]}-1$, which, by the\vadjust{\goodbreak}
convergence\vspace*{1pt}
(\ref{eWtoL}), is approximately $-L(t)/k$. Additionally, by~\cite{AGdHS2008}, Proposition~3.1,
$\widehat W_n$ is constant for long stretches of
time. It is well known (see, e.g.,~\cite{Jacod1985}, Theorem~2.2.1)
that
a sequence of random walks with drift $c/k$, suitably scaled, converges as
$k \to\infty$ to a $c$-drifted Brownian motion. Thus, we expect to find
segments of drifted Brownian paths in our limit. According to the
convergence (\ref{eWtoL}), the drift is expressed in terms of the
$L$-process. This is what the definition of $Y$ expresses.

Thus, the idea when dealing with either the conditioned or the
unconditioned IPC is to cut these sin-trees into pieces corresponding
to stretches where $\widehat W$ is constant, and to look
separately at the convergence of each piece. Since we deal extensively
with codings of trees by paths, we call these pieces of trees
\textit{segments}, although in the terminology of \cite
{NewmanStein1995,GoodmanPondsInPrep,DamronSapozhnikov2010} and other
works they are known as the \textit{ponds} of the IPC.

In Section~\ref{subunique} we establish existence and uniqueness
results for equation~(\ref{eqSDE}).

In Section~\ref{secfixed} we look at the convergence of the rescaled
paths coding a sequence of such segments for well chosen, fixed values of
the $\widehat W$-process. In fact, we consider slightly more general
settings which
allows us to treat the case of the IIC as well as the various flavors of
the IPC.

In Section~\ref{secproofmain} we prove Theorem~\ref{Trightscale} and
Theorem~\ref{Ttwosidedscale} by combining
segments. 
To deal with the fact that $\widehat W$ is random and
exploit the convergence (\ref{eWtoL}), we use a coupling argument (see
Section~\ref{subcoupling}). We then prove that the segments fall into
the family dealt with in Section~\ref{secfixed}.
Because of the divergence of the $L$-process at the origin, we only
perform the above for subtrees above certain levels, and
bound the resulting error separately. The proof of Theorem~\ref{Tlimexist}
follows from Theorem~\ref{Trightscale}.

Finally, in Section~\ref{seclevels} we apply our convergence results
to establish asymptotics for level and volume estimates of the IPC, to
recover and extend results of~\cite{AGdHS2008}.

\section{\texorpdfstring{Solving (\protect\ref{eqSDE})}{Solving (E(L))}} \label{subunique}

\begin{claim}\label{cl31}
Solutions to (\ref{eqSDE}), $\cE(L/2)$ are unique in law.
\end{claim}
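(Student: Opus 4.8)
The plan is to reduce the equation $\cE(L)$ (and identically $\cE(L/2)$) to a sequence of standard SDE/ODE-type problems on the random intervals where $L$ is constant, and to patch these together using the monotonicity of the infimum process $\underline{Y}$. Fix a realization of the Poisson process $\cP$; then $t\mapsto L(-t)$ is a nonincreasing step function (for $t<0$ it is $+\infty$, but we will only ever evaluate $L$ at $-\underline{Y}_s\le 0$, and $\underline{Y}$ starts at $0$ and decreases, so the argument immediately becomes positive and the divergence at the origin is harmless for uniqueness after time $0$). Since $B$ is a.s.\ continuous and the drift coefficient $s\mapsto L(-\underline{Y}_s)$ is, pathwise, a bounded measurable function of the past of $Y$ (it depends on $Y$ only through $\underline{Y}$, which is adapted and of bounded variation), any solution $Y$ is a continuous semimartingale, $\underline{Y}$ is well-defined, and the integral in $\cE(L)$ makes sense.

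The key structural observation is that $\underline{Y}$ can only decrease while $Y=\underline{Y}$, and the value $L(-\underline{Y}_s)$ is locally constant in $\underline{Y}_s$ except at the (discrete) jump levels of the envelope $L$. So I would argue as follows. Let $y_0=0>y_1>y_2>\cdots$ be the successive levels at which $L(-\,\cdot\,)$ jumps (the $y$-coordinates, negated, of the ``record'' points of $\cP$), and let $\ell_0>\ell_1>\cdots$ be the corresponding constant values of $L$ on the intervals $(y_{j+1},y_j)$ and $(-\infty, \cdot)$ appropriately. Given a solution $Y$ with $Y_0=0$, set $\tau_0=0$ and $\tau_{j+1}=\inf\{t>\tau_j : \underline{Y}_t = y_{j+1}\}$. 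On $[\tau_j,\tau_{j+1}]$ we have $\underline{Y}_t\in[y_{j+1},y_j]$, hence $L(-\underline{Y}_t)=\ell_j$ is \emph{constant}, so on this interval $Y$ solves the classical equation $dY_t = dB_t - \ell_j\,dt$ with a \emph{fixed} drift. For a constant-drift Brownian motion the pair $(Y,\underline{Y})$ is pathwise a deterministic continuous functional of the driving path $B$, and in particular the first time $\underline{Y}$ reaches the next level $y_{j+1}$ is determined; thus $Y$ on $[\tau_j,\tau_{j+1}]$ and the value $\tau_{j+1}$ are uniquely determined by $B$ and the data of $\cP$. Induct on $j$: given that $Y$ is determined up to $\tau_j$ (with $\underline{Y}_{\tau_j}=y_j$ and $Y_{\tau_j}$ determined), the next piece is determined, so $Y$ is determined on $\bigcup_j[\tau_j,\tau_{j+1})$. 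Finally one checks $\tau_j\to\infty$ a.s.: the levels $y_j\to-\infty$ because the record points of a Poisson process in the quarter plane have unbounded $x$-coordinate while their $y$-coordinate tends to $0$, so $-\underline{Y}$ must grow without bound, forcing $\underline{Y}_t\to-\infty$ and hence (since $Y\ge\underline{Y}$ is false — rather $\underline{Y}\le Y$, but $Y-\underline{Y}\ge0$ and $Y_t\ge \underline{Y}_t$ with $\underline{Y}_t$ bounded below by the running drift-perturbed minimum) $t\to\infty$; more simply, on each piece $\tau_{j+1}-\tau_j$ is the hitting time of level $y_{j+1}-y_j<0$ by an $\ell_j$-drifted Brownian motion, which is a.s.\ finite, and there are infinitely many pieces only if infinitely much time elapses. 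This yields pathwise uniqueness, which is stronger than uniqueness in law; the same argument verbatim, with $\ell_j$ replaced by $\ell_j/2$, handles $\cE(L/2)$.

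The main obstacle is the behavior near $t=0$: the envelope $L(-\underline{Y}_s)$ diverges as $\underline{Y}_s\uparrow 0$, so there are infinitely many jump levels $y_j$ accumulating at $0$, and one must make sure that $Y$ actually leaves the origin (i.e.\ that $\underline{Y}$ is strictly decreasing immediately and the $\tau_j$ do not accumulate at $0$ in a way that leaves $Y$ underdetermined on a positive-measure set near $0$). I would handle this by noting that for any $\eps>0$, on the event $\{\underline{Y}_\eps < 0\}$ the preceding piecewise argument applies on $[\eps',t]$ for suitable small $\eps'$, pins down $Y$ there, and then one takes $\eps\downarrow 0$; to see $\underline{Y}_\eps<0$ for all $\eps>0$ a.s., observe that before $\underline{Y}$ moves the drift is $L(0^-)=+\infty$-ish — more carefully, compare $Y$ with the solution where $L$ is truncated at a large level $M$, which pushes $Y$ strictly below $0$ in arbitrarily short time, and let $M\to\infty$. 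Alternatively, and perhaps cleanest, one invokes the Brownian scaling noted in the excerpt ($kL(kt)\overset{d}{=}L(t)$, so $Y$ has Brownian scaling) to transfer good behavior at a macroscopic time down to a neighborhood of $0$. This near-origin analysis is the only delicate point; away from $0$ the constant-drift decomposition makes uniqueness essentially immediate.
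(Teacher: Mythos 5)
Your decomposition into constant-drift Brownian segments is the right picture away from $t=0$ and would indeed give pathwise uniqueness on each piece, given matching data at the start of the piece. But the near-origin handling has a genuine gap --- and you should be suspicious that you claim to establish \emph{pathwise} uniqueness when the paper explicitly flags that as an open question and proves only uniqueness in law. The jump locations of $L$ are the $x$-coordinates (not the $y$-coordinates, as you write) of the record points of $\cP$, and these $x$-coordinates accumulate at $0$. Hence the jump levels of $s\mapsto L(-s)$ accumulate at $0$ from below: there is no largest $y_1<0$, no first interval $[\tau_0,\tau_1]$, and your forward induction never begins. Starting instead from a level $-j^\eps$ bounded away from $0$, where the drift is at most $\eps^{-1}$ and your piecewise argument applies, would require first showing that two solutions driven by the \emph{same} $B$ reach $-j^\eps$ at the same time with matching paths; nothing in your sketch establishes this. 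Your fallback suggestions (truncate $L$, or invoke Brownian scaling) address only the easy fact that $\underline Y_\eps<0$ a.s.\ for all $\eps>0$, which already follows from $Y\le B$, not the matching question.

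The paper circumvents the origin precisely where you are stuck, by retreating to uniqueness in law and coupling through an auxiliary noise. It first shows any solution is a.s.\ continuous at $0$ (using $Y\le B$ and $L(u)<u^{-(1+\eps)}$ for small $u$ to control the drift integral), then, for each solution $Y^i$, replaces the path after the stopping time $t_i^\eps=\inf\{t:-\underline Y^i_t>j^\eps\}$ by the pathwise-unique solution $Z^\eps$ of the truncated equation $\cE(\eps,L)$ driven by a single \emph{independent} Brownian motion $\beta$. This replacement preserves each marginal law while forcing the two modified processes $Y^{1,\eps},Y^{2,\eps}$ to share a common tail after their respective stopping times; since $t_1^\eps\vee t_2^\eps\to 0$ a.s.\ as $\eps\to 0$, it follows that $E[F(Y^1)]=E[F(Y^2)]$ for all bounded continuous $F$. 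Your constant-drift decomposition could plausibly serve as a concrete substitute for the ``standard arguments'' the paper cites for the well-posedness of $\cE(\eps,L)$, but the coupling via an independent driving noise --- and the restriction to distributional rather than pathwise uniqueness --- is essential, not optional.
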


Curiously, we were unable to determine whether the solutions to (\ref
{eqSDE}) are a.s.
pathwise unique (i.e., whether strong uniqueness holds). For our
purposes uniqueness in
law suffices.
\begin{pf*}{Proof of Claim~\ref{cl31}}
We prove this claim for equation (\ref{eqSDE}). The proof for equation
$\cE(L/2)$ is identical.

Let $Y$ be a solution of (\ref{eqSDE}). Since $L$ is positive, $Y_t \le
B_t$. Since $L$ is nonincreasing, $\int_{0}^t L(-\underline{Y}_{
s})\,ds \le
\int_0^t L(-\underline{B}_{ s}) \,ds$. For any fixed $\eps>0$, a.s.
for all small enough $s$, $-\underline{B}_{ s} > s^{1/2-\eps}$, while
a.s. for all small enough $u$, $L(u) < u^{-(1+\eps)}$. We deduce that
almost surely $\lim_{t \to0} \int_{0}^t L(-\underline{Y}_{ s})\,ds =
0$. Thus,
any solution of (\ref{eqSDE}) is continuous.\vadjust{\goodbreak}

Let us now consider two solutions $Y^1$, $Y^2$ of
$\mathcal{E}(L)$ and fix $\eps>0$. Introduce
\[
j^{\eps} := \inf\{ t >0\dvtx L(t) < \eps^{-1}\}
\]
and
\begin{eqnarray*}
t_0^{\eps} &:=& \inf\{t>0\dvtx -\underline{B}_{ t} >
j^{\eps}\},\\
t_1^{\eps} &:=& \inf\{t>0\dvtx -\underline{Y}^1_{ t} >
j^{\eps}\},\\
t_2^{\eps} &:=& \inf\{t>0\dvtx -\underline{Y}^2_{ t} >
j^{\eps}\}.
\end{eqnarray*}

From the continuity of $Y^1, Y^2$ we have
$Y^1(t_{1}^{\eps}) =
Y^2(t_2^{\eps})=-j^{\eps}$. Moreover, we have
a.s. $t_1^{\eps} \vee
t_2^{\eps} \le t_0^{\eps}$, and, therefore,
%
\begin{equation} \label{eqmaxt1t2}
t_1^{\eps} \vee
t_2^{\eps} \xrightarrow{\eps\to0}^{\mathrm{a.s.}} 0.
\end{equation}

Introduce a Brownian motion $\beta$ independent of $B$ and consider
the (SDE)
{\renewcommand{\theequation}{$\cE(\eps,L)$}
\begin{equation}\label{eqSDEeps}
Z_t^{\eps} = \beta_t - \int_0^t
L( j^{\eps}-\underline{Z}^{\eps}_s ) \,ds.
\end{equation}}

\noindent Pathwise existence and uniqueness hold for (\ref{eqSDEeps}) by
standard arguments.

We then define
\begin{eqnarray*}
Y^{1,\eps}_t &=& \cases{
Y^1_t, &\quad if $t < t_1^{\eps}$, \cr
Y^1_{t_{1}^{\eps}} + Z_t^\eps, &\quad if $t \ge t_1^{\eps}$,}
\\
Y^{2,\eps}_t &=& \cases{
Y^2_t, &\quad if $t < t_2^{\eps}$, \cr
Y^2_{t_{2}^{\eps}} + Z_t^\eps, &\quad if $t \ge t_2^{\eps}$.}
\end{eqnarray*}
Clearly, $Y^{1,\eps}, Y^{2,\eps}$ are a.s. continuous, and, moreover,
$Y^1$ and $Y^{1,\eps}$ have the same distribution, and so do $Y^2$ and
$Y^{2,\eps}$. However, $(Y^{i,\eps}(t_i^{\eps}+t))_{t\ge0}$ for
$i=1,2$ have a.s. the same path. From this fact, the continuity of
$Y^{1,\eps}, Y^{2,\eps}$ and (\ref{eqmaxt1t2}), it follows that for any
$F \in\cC_b(\cC(\rr_+,\rr),\rr)$
\[
\vert E[F(Y^{1})] -E[F(Y^{2})]\vert
= \vert E[F(Y^{1,\eps})] -E[F(Y^{2,\eps})
]\vert
\]
goes to 0 as $\eps$ goes to 0, which completes the proof.
\end{pf*}

\section{Scaling simple sin-trees and their segments}
\label{secfixed}

The goal of this section is to establish the convergence of the
rescaled paths encoding suitable sequences of well-chosen segments. In
order to cover the separate cases at once, we will work in a slightly
more general context than might seem necessary. We first look at a
sequence of particular sin-trees $\mathbf{T}^k$ for which the vertices
adjacent to the backbone generate i.i.d. subcritical (or critical)
Galton--Watson trees. The law of such a tree is determined by the
branching law on these Galton--Watson trees and the degrees along the
backbone. If the degrees along the backbone do not behave too
erratically and the percolation\vspace*{1pt} parameter scales
correctly, then the sequence of Lukaciewicz paths $\bV^k$ has a scaling
limit.\vadjust{\goodbreak}

The results for the IIC follow directly.
Also, we determine the scaling limits of the paths encoding a sequence
of subtrees obtained by truncations at suitably vertices on the backbones
of $\mathbf{T}^k$.
These will be important intermediate results in the proofs of
Theorems~\ref{Trightscale} and~\ref{Ttwosidedscale}.

\subsection{Notation}\label{subnotationsegments}

Throughout this section we fix for each $k\in\Z_+$ a parameter $w_k
\in
[0,1/\sigma]$, and denote by $(\theta_n^k)_{n \in\zz_+}$ a sequence of
i.i.d. subcritical Galton--Watson trees with branching law $\Bin
(\sigma,
w_k)$. For each $k$ we also let $Z_k$ be a sequence of random variables
$(Z_{k,n})_{n\ge0}$ taking values in $\Z_+$.
%
\begin{defn}\label{defZktrees}
The \textit{$(Z_k,\theta^k)$-tree} is the sin-tree defined as follows. The
backbone $\BB$ is the rightmost branch. The vertex $\BB_i$ has $1+Z_{k,i}$
children, including $\BB_{i+1}$. Let $v_0,\ldots$ be all vertices adjacent
to the backbone, in lexicographic order, and identify $v_n$ with the root
of the tree $\theta^k_n$.
\end{defn}

Thus, the first $Z_{k,0}$ of the $\theta$'s are attached to children of
$\BB_0$, the next $Z_{k,1}$ to children of $\BB_1$ and so on.
We will use the notation $\bT^k$ to designate the
$(Z_k,\theta^k)$-tree, and $\bV^k$ for its Lukaciewicz path.
%
\begin{defn}\label{itruncation}
Let $T$ be a sin-tree whose backbone is its rightmost branch. For $i
\in
\Z_+$, let $\BB_i$ be the vertex at height $i$ on the backbone of $T$.
The \textit{$i$-truncation} of $T$ is the subtree
\[
T^i:= \{ v \in T\dvtx v \le\BB_{i}\},
\]
where $\le$ denotes lexicographic ordering.
\end{defn}

Thus, the $i$-truncation of a tree consists of the backbone up to $\BB_i$
and the subtrees attached strictly below level $i$. We denote by
$\bT^{k,i}$ the $i$-truncation of $\bT^k$, and by $\bV^{k,i}$ its
Lukaciewicz path. We further define $\tau^{(i)}$ as the time of the
$(i+1)$th return to 0 of $\bV^k$; here we suppress the dependence of
$\tau^{(i)}$ on $k$. Observe then that $\bV^{k,i}$ coincides
with $\bV^{k}$ up to the time $\tau^{(i)}$, takes the value $-1$ at
$\tau^{(i)}+1$, and terminates at that time.

It will be useful to study first the special case where $Z_k$ is a sequence
of i.i.d. binomial $\Bin(\sigma,w_k)$ random variables. Observe that in
this case the subtrees attached to the backbone are i.i.d. Galton--Watson
trees [with branching law $\Bin(\sigma,w_k)$]. We use calligraphed letters
for the various objects in this case. We denote the binomial variables
$\cZ_{k,n}$, we write $\cT^k$ for the corresponding $(\cZ_k,
\theta^k)$-tree, $\cT^{k,i}$ for its $i$-truncation, and $\cV^k, \cV^{k,i}$
for the corresponding Lukaciewicz paths.

In the perspective of proving our main results, we note another special
distribution of the variables $Z_{k,n}$ that is of interest. If
$Z_{k,n}$ are i.i.d. $\Bin(\sigma-1, w_k)$, then the subtrees
emerging from the backbone of the $(Z_k,\theta^k)$-tree are independent
subcritical percolation clusters with parameter $w_k$. In particular, for
suitably chosen values of $w_k, n_k$, $\bT^{k,n_k}$ has the same law
as a\vadjust{\goodbreak}
certain segment of $\cR$. On the other hand, if $w_k\equiv\sigma
^{-1}$, then the
corresponding $(Z,\theta)$-tree is simply the IIC conditioned on its
backbone being the rightmost branch of $\mathcal{T}$, which we denote
by $\IIC_{\cR}$. We will see
below
that the IIC with unconditioned backbone, as well as segments of the
unconditioned IPC,
can be treated in a similar way.


\subsection{Scaling of segments}
\label{subscalesegments}

\begin{prop}\label{prfixeddrift}
Let $Z_{k,n}$ be random variables satisfying the following assumptions:
%
{\renewcommand{\theequation}{$\cA$}
\begin{equation}\label{eqBBchildrenAssumps}
\cases{
\mbox{For any $k$, the variables $(Z_{k,n})_{n}$ are i.i.d.;} \cr
\mbox{for some $C,\alpha>0$, $\E Z_{k,n}^{1+\alpha} < C$ for any $k$;}
\cr
\mbox{for some $\eta>0$, $\P(Z_{k,n}>0) > \eta$ for any $k$;} \cr
\mbox{if $m_k=\E Z_{k,n}$ then $m=\lim m_k$ exists.}}
\end{equation}}

\noindent
Further assume that
$w_k\leq\sigma^{-1}$ satisfy $\lim_k k(1-\sigma w_k)=u$. Then, as $k
\to
\infty$, weakly in $\cC(\R_+,\R)$,
%
\setcounter{equation}{22}
\begin{equation} \label{convexc}
\biggl(\frac{1}{k}\bV^k_{[k^2 t]}\biggr)_{t\ge0}
\xrightarrow{k\to\infty} (X_t)_{t\ge0},
\end{equation}
where $X_t = Y_t-\underline{Y}_t$ and $Y_t = B_{\gamma t} - u t$ is a
drifted Brownian motion.
\end{prop}

Since our goal is to represent segments of the IPC as well-chosen $\bT
^{k,i}$, we have to deduce from
Proposition~\ref{prfixeddrift} some results for the coding paths of
the truncated
trees. The convergence will take place in the space of continuous stopped
paths denoted $\cS$. An element $f \in\cS$ is given by a lifetime
$\zeta(f)\geq0$ and a continuous function $f$ on $[0,\zeta(f)]$.
$\cS$ is a Polish space with metric
\[
d(f,g) = \vert\zeta(f)-\zeta(g)\vert+\sup_{t\leq\zeta(f)\wedge
\zeta
(g)} \{
\vert f(t)-g(t)\vert \}
.
\]

It is clear from the right-hand description of Lukaciewicz paths that
the path
of $\bT^{k,i}$ visits 0 exactly when reaching backbone vertices. In
particular, its length is~$\tau^{(i)}$, the time of the
$i$th return to 0 by the path $\bV^k$. We shall use this to prove the
following.
%
\begin{corollary}\label{cornkexc}
Assume the conditions of Proposition~\ref{prfixeddrift} are in force.
Assume further that $0<x=\lim n_k/k$. Then, weakly in $\cS$,
%
\begin{equation}\label{convnkexc}
\biggl(\frac{1}{k} \bV_{[k^2 t]}^{k,n_k}\biggr)_{t \le\tau^{(n_k)}/k^2}
\xrightarrow{k\to\infty} (X_t)_{t\le\tau_{m x}},
\end{equation}
where $X$ and $Y$ are as in Proposition~\ref{prfixeddrift}, and
$\tau_y$ is the stopping time $\inf\{t>0\dvtx Y_t=-y \}$.
\end{corollary}

It is then straightforward to deduce convergence of the height functions.
Let $h^{k}$ (resp., $h^{k,i}$) denote the height function coding the tree
$\bT^k$ (resp.,~$\bT^{k,i}$).\vadjust{\goodbreak}

\begin{corollary}\label{corheight,fixed}
Suppose the assumptions of Corollary~\ref{cornkexc} are in force. Then
weakly in $\cC(\R_+,\R)$,
%
\begin{equation}\label{eqheightfull}
\biggl(\frac{1}{k} h^{k}_{[tk^2]}\biggr)_{t\ge0} \xrightarrow{k\to\infty}
\biggl(\frac{2}{\gamma}(Y_t - \underline{Y}_{ t}) - \frac
{1}{m}\underline{Y}_{ t}\biggr)_{t\ge0}.
\end{equation}
Furthermore, weakly in $\cS$,
%
\begin{equation}\label{eqheightnk}
\biggl(\frac{1}{k} h^{k,n_k}_{[tk^2]}\biggr)_{t\le\tau^{(n_k)}/k^2}
\xrightarrow{k\to\infty}
\biggl(\frac{2}{\gamma} (Y_t - \underline{Y}_{ t}) - \frac
{1}{m} \underline{Y}_{ t}\biggr)_{t \le\tau_{m x}}.
\end{equation}
\end{corollary}

\subsection{\texorpdfstring{Proof of Proposition \protect\ref{prfixeddrift}}{Proof of Proposition 4.3}}
\label{secfixeddriftproof}

We start with the following lemma, which relates the Lukaciewicz paths
of a
sequence of trees, and that of the tree consisting of a backbone to
which the trees of the sequence are attached.

\begin{lemma}\label{LgluetoBB}
Let $(\theta_n)_{n\geq0}$ be a sequence of trees, and define the sin-tree
$T$ to be the sin-tree with a backbone $\BB$ on the right, such that the
root of $\theta_n$ is identified with $\BB_n$. Let $U$ be the Lukaciewicz
path coding the sequence $\theta$, and let $V$ be the Lukaciewicz path of
$T$. Then
\[
V_n = U_n + 1 -\underline{U}_{n-1},
\]
where $\underline{U}$ is the infimum process of $U$ and by convention
$\underline{U}_{-1}=1$.
\end{lemma}
\begin{pf}
The lemma follows directly from the definition of Lukaciewicz paths. $U$
reaches a new infimum (and $\underline{U}$ decreases) exactly when the
process completes the exploration of a tree in the sequence. The
increments of $V$ differ from the increments of $U$ only at vertices of
the backbone of $T$, where the degree in $T$ is one more than the degree
in $\theta_n$.
\end{pf}

We first establish the proposition in the special case introduced earlier,
where $\cZ_k$ is a sequence of i.i.d. $\Bin(\sigma,w_k)$ random variables.
In this case, the subtrees attached to the backbone of $\cT^k$ are a
sequence of i.i.d. Galton--Watson trees with branching law having expectation
$\sigma w_k$ (which tends to 1 as $k\to\infty$) and variance $\sigma w_k
(1-w_k)$ (which tends to $\gamma$ as $k\to\infty$).

The Lukaciewicz path $\cU^{k}$ of this sequence of Galton--Watson trees
is a
random walk with drift $\sigma w_k-1$ and stepwise variance $\sigma w_k
(1-w_k)$. From a well-known extension of Donsker's invariance principle
(see, e.g.,
\cite{Jacod1985}, Theorem II.3.5),
it follows that
\[
\biggl(\frac{1}{k} \cU^k(k^2 t)\biggr)_{t\ge0} \xrightarrow{k\to\infty}
(Y_t)_{t\ge0}
\]
weakly in $\cC(\R_+,\R)$. It now follows from Lemma~\ref{LgluetoBB} that
%
\begin{equation}\label{eqVlimit}
\biggl(\frac{1}{k} \cV^k(k^2 t)\biggr)_{t\ge0} \xrightarrow{k\to\infty}
(X_t)_{t\ge0}.
\end{equation}

Having Proposition~\ref{prfixeddrift} for $\cZ_{k,n}$, we now
extend it
to other degree sequences. By the Skorokhod representation theorem, we may
assume (by changing the probability space as needed) that
(\ref{eqVlimit}) holds a.s.:
%
\begin{equation}\label{eqconvforGW}
\biggl(\frac{1}{k} \cV^k(k^2 t)\biggr)_{t\ge0} \xrightarrow{k \to\infty
}^{\mathrm{a.s.}}
(X_t)_{t\ge0}.
\end{equation}
%
We further couple the trees $\cT^k$ and $\bT^k$ (on a suitable
probability space where the sequences $Z_k$ are defined) by using the
same sequences $\theta^k$ of off-backbone trees. Namely, the subtree
descended from the $n$th vertex adjacent to the backbone, in
lexicographic order, is $\theta_n^k$ for both $\cT^k$ and $\bT^k$, and
we will identify $v\in\theta_n^k$ with the corresponding vertices of
$\cT^k$ and $\bT^k$. However, because the sequences $Z_k$ and $\cZ_k$
are different, the Lukaciewicz paths of these two trees differ, and we
now give bounds to control this difference.

It will be convenient to consider the sets of points
\[
\bG^k:= \{(i,\bV^{k}(i)), i \in\Z_+\},\qquad
\cG^k:= \{(i,\cV^{k}(i)), i \in\Z_+\},
\]
which are the integer points in the graphs of $\bV^k,\cV^k$. To each vertex
$v \in\bT^k$ corresponds a point $(\bx_v, \by_v)\in\bG^k$ [and
similarly $(x_v,y_v)\in\cG^k$ for $v\in\cT^k$].
From the right-hand description of Lukaciewicz paths introduced in Section~\ref{subenc1}, we see that
\begin{eqnarray*}
\bG^k &=& \{(\bx_v,\by_v)\dvtx v \in\bT^k\} = \bigl\{
\bigl( \#(\bT^k)^v, n(v,\bT^k)-1\bigr)\dvtx v \in\bT^k \bigr\}, \\
\cG^k &=& \{(x_v,y_v)\dvtx v \in\cT^k\} = \bigl\{
\bigl( \#(\cT^k)^v, n(v,\cT^k)-1\bigr)\dvtx v \in\cT^k \bigr\}.
\end{eqnarray*}
The next step is to show that these two sets are close to each other. Any
$v\in\theta^k_n$ is contained in both $\bT^k$ and $\cT^k$. We first show
that $\bx_v\approx x_v$ and $\by_v\approx y_v$ for such $v$, and then show
how to deal with the backbones.

Any tree $\theta^k_n$ is attached by an edge to some vertex in the backbone
of $\bT^k$ and $\cT^k$. For any vertex $v\in\theta^k_n$ we denote the
height of this vertex by $\bl_v$ and $\ell_v$, respectively:
\[
\bl_v = \sup\{t\dvtx \BB_t < v \mbox{ in } \bT^k \},\qquad
\ell_v = \sup\{t\dvtx \BB_t < v \mbox{ in } \cT^k \}.
\]
These values depend implicitly on $k$. Note that $\bl_v,\ell_v$ do not
depend on which $v\in\theta^k_n$ is chosen, hence, by a slight abuse of
notation, we also use $\bl_n,\ell_n$ for the same values whenever
$v\in\theta^k_n$.
%
\begin{lemma}
Assume $v\in\theta^k_n$. Then
\begin{eqnarray*}
\vert\bx_v-x_v\vert &=& \vert\bl_v-\ell_v\vert, \\
\vert\by_v-y_v\vert &\leq&\sigma+ Z_{k,\bl_v}.
\end{eqnarray*}
\end{lemma}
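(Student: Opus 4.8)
The plan is to read off both coordinates of $(\bx_v,\by_v)$ and $(x_v,y_v)$ from the right description of the Lukaciewicz path (subsection~\ref{sub:enc1}), exactly as recorded in the two displayed identities for $\bG^k,\cG^k$ just above: for a sin-tree $T$ with rightmost backbone and $v\in T$, the associated point of its graph is $(\#T^v,\,n(v,T)-1)$. Fix $v\in\theta^k_n$ and put $i=\bl_v=\bl_n$, $j=\ell_v=\ell_n$, and let $a=\sum_{m<i}Z_{k,m}$, $b=\sum_{m<j}\cZ_{k,m}$ be the indices of the first $\theta$-tree attached at level $i$ in $\bT^k$, resp.\ at level $j$ in $\cT^k$, so that $0\le n-a\le Z_{k,i}-1$ and $0\le n-b\le\cZ_{k,j}-1$. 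For the first coordinate, observe that in either tree the vertices $w\le v$ in lexicographic order are precisely the backbone vertices up to the one carrying $\theta^k_n$ (that is, $\BB_0,\dots,\BB_i$ in $\bT^k$ and $\BB_0,\dots,\BB_j$ in $\cT^k$), the complete trees $\theta^k_0,\dots,\theta^k_{n-1}$, and the sub-tree $(\theta^k_n)^v$. Hence $\bx_v=(i+1)+\sum_{m<n}\#\theta^k_m+\#(\theta^k_n)^v$ and $x_v=(j+1)+\sum_{m<n}\#\theta^k_m+\#(\theta^k_n)^v$, with identical sums since the $\theta$'s are literally shared; subtracting gives $\bx_v-x_v=i-j=\bl_v-\ell_v$.

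For the second coordinate I would count the edges lying to the right of the path $\emptyset\to v$ in $\bT^k$, which by the right description equals $n(v,\bT^k)$. This path climbs the backbone to the vertex $\BB_i$ carrying $\theta^k_n$ and then runs through $\theta^k_n$ down to $v$. At each backbone vertex below $\BB_i$ the path leaves along the rightmost child, so no edges lie to its right there; at $\BB_i$ it leaves along the $(n-a+1)$-st child (the root of $\theta^k_n$), so the $Z_{k,i}-(n-a)$ later children of $\BB_i$ — the remaining $\theta$-roots at that level together with the backbone edge to $\BB_{i+1}$ — lie to its right; and inside $\theta^k_n$ there are $n(v,\theta^k_n)$ edges to its right, a quantity intrinsic to the plane tree $\theta^k_n$. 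Therefore
\[
  \by_v=\bigl(Z_{k,i}-(n-a)\bigr)+n(v,\theta^k_n)-1,
  \qquad
  y_v=\bigl(\cZ_{k,j}-(n-b)\bigr)+n(v,\theta^k_n)-1,
\]
and the common term $n(v,\theta^k_n)$ cancels: $\by_v-y_v=\bigl(Z_{k,\bl_v}-(n-a)\bigr)-\bigl(\cZ_{k,\ell_v}-(n-b)\bigr)$. The first bracket lies in $\{1,\dots,Z_{k,\bl_v}\}$ and the second in $\{1,\dots,\cZ_{k,\ell_v}\}\subseteq\{1,\dots,\sigma\}$, whence $-\sigma\le\by_v-y_v\le Z_{k,\bl_v}-1$ and so $\abs{\by_v-y_v}\le\sigma+Z_{k,\bl_v}$.

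The only real work here is the bookkeeping: checking that the shared pieces — the complete trees $\theta^k_0,\dots,\theta^k_{n-1}$ and the internal structure of $(\theta^k_n)^v$ — enter the two decompositions identically and therefore cancel, and correctly counting the right-edges at the single backbone vertex $\BB_{\bl_v}$ to which $\theta^k_n$ is attached. Nothing else is delicate; in particular the $\sigma$ in the bound is simply the deterministic upper bound $\cZ_{k,\ell_v}\le\sigma$ available in the auxiliary tree $\cT^k$ (for which $\cZ_{k,m}\sim\Bin(\sigma,w_k)$, hence $\cZ_{k,m}\le\sigma$), whereas no such bound is available for the backbone degrees of $\bT^k$, which is what produces the asymmetric term $Z_{k,\bl_v}$.
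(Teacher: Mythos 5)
Your proof is correct and follows essentially the same route as the paper's: both use the right description of the Lukaciewicz path to split $\#(\bT^k)^v$ and $n(v,\bT^k)$ into a backbone contribution and a contribution from $\theta^k_n$, observing that the latter is literally shared between $\bT^k$ and $\cT^k$ and therefore cancels. You are slightly more explicit, writing exact formulas for $\by_v$ and $y_v$ where the paper merely bounds the backbone contributions by $Z_{k,\bl_n}$ and $\cZ_{k,\ell_n}\le\sigma$ respectively, but the underlying argument is the same.
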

\begin{pf}
We have
\[
x_v = \#(\cT^k)^v = \sum_{i<n} \# \theta_i^k + \#(\theta^k_n)^v +
\ell_n\vadjust{\goodbreak}
\]
and, similarly,
\[
\bx_v = \#(\bT^k)^v = \sum_{i<n} \# \theta_i^k + \#(\theta^k_n)^v
+ \bl_n.
\]
The first claim follows.

For the second bound use $\by_v = n(v,\bT^k) -1$. There are
$n(v,\theta^k_n)$ edges connecting $(\bT^k)^v$ to its\vspace*{1pt} complement inside
$\theta^k_n$ and at most $Z_{k,\bl_n}$ edges connecting $\BB_{\bl
_n}$ to
the complement. Similarly, in $\cT^k$ we have the same $n(v,\theta^k_n)$
edges inside $\theta^k_n$ and at most $\cZ_{k,\ell_n}\le\sigma$ edges
connecting $\BB_{\ell_n}$ to the complement. It follows that the
difference is at most $\sigma+Z_{k,\bl_n}$.
\end{pf}

Next we prepare to deal with the backbone. For a vertex $v\in\bT^k$, define
$u\in\bT^k$ by
\[
u = \min\{ u\in(\bT^k\setminus\BB)\dvtx u\ge v \}.
\]
If $v\notin\BB$, then $u=v$. If $v$ is on the backbone, then $u$ is
the first
child of $v$, unless $v$ has no children outside the backbone. Note that
$u\in\theta^k_n$ for some $n$, so we may also consider $u$ as a
vertex of
$\cT^k$. Note also that $v\to u$ is a nondecreasing map from $\bT^k$ to
$\cT^k$.
%
\begin{lemma}\label{LBBdisp}
For a backbone vertex $v$ in $\bT^k$, define $n$ by $\theta^k_n < v <
\theta^k_{n+1}$. Then
\begin{eqnarray*}
\vert\bx_v - \bx_u\vert &\le& 1 + \bl_{n+1} - \bl_n, \\
\vert\by_v - \by_u\vert &\le& \sigma+ Z_{k,\bl_{n+1}}.
\end{eqnarray*}
\end{lemma}
\begin{pf}
The only vertices between $v$ and $u$ in the lexicographic order are $u$
and some of the backbone vertices with indices from $\bl_n$ to
$\bl_{n+1}$, yielding the first bound.

Let $w\in\BB$ be $u$'s parent. If $v$ has children apart from the next
backbone vertex, then $w=v$ and $u$ is $v$'s first child, so $\by
_u-\by_v
= k_u-1 \le\sigma-1$. If $v$ has no other children, then $\by_u-\by
_v =
(k_u-1) + (k_w-1) \le\sigma+ Z_{k,\bl_{n+1}}$.
\end{pf}
%
\begin{lemma}\label{Lhdisp}
Fix $\eps,A>0$ and let $w$ be the $[Ak^2]$th vertex of $\bT^k$. Then with
high probability $\ell_w,\bl_w \leq k^{1+\eps}$.
\end{lemma}
\begin{pf}
Since each $\theta^k_n$ is (slightly) subcritical, we have
$\P(\#\theta^k_n > k^2) > c_1 k^{-1}$ for some $c_1>0$. Consider the first
$k^{1+\eps}$ vertices along the backbone in $\bT^k$. With high
probability, the number of $\theta$'s attached to them is at least
$\eta k^{1+\eps}/2$. On this event, with high probability, at
least $c_2 k^\eps$ of these have size at least $k^2$, hence, there are
$c_2 k^{2+\eps} \gg Ak^2$ vertices $v$ with $\bl_v \le k^{1+\eps}$
(and these
include the first $Ak^2$ vertices in the tree). $\ell_w$ is dealt with
in the same way.
\end{pf}
%
\begin{lemma}\label{Lhvdisp}
Fix $A>0$ and let $w$ be the $[Ak^2]$th vertex of $\bT^k$. For $\eps>0$
small enough,
\[
\P\Bigl( \sup_{v<w} \vert\bx_v - x_u\vert > 3k^{1+\eps} \Bigr)
\xrightarrow{k\to\infty} 0
\]
and
\[
\P\Bigl( \max_{v<w} \vert\by_v-y_u\vert > k^{1-\eps} \Bigr)
\xrightarrow{k\to\infty} 0.
\]
\end{lemma}
\begin{pf}
For a vertex $v\in\theta^k_n$ off the backbone we have $u=v$ and
\[
\vert\bx_v - x_u\vert \le\vert\bl_v - \ell_v\vert \le\bl_v +
\ell_v
\le\bl_w
+ \ell_w,
\]
and with high probability this is at most $2k^{1+\eps}$. If $v<w$ is in
the backbone, then we argue that $\vert\bx_v-\bx_u\vert \ll
k^{1+\eps}$.
To this
end, note that $\bl_{n+1}-\bl_n$ is dominated by a geometric random
variable with mean $1/\eta$ (since the $Z_{k,n}$'s are independent).
Since only $n<Ak^2$ might be relevant to the initial part of the tree,
this shows that with high probability $\vert\bx_v-\bx_u\vert <
c\log k
\ll
k^{1+\eps}$.

The bound on the $y$'s follows from the bounds on $\vert\by
_v-y_u\vert$. All
that is needed is to show that with high probability $Z_{k,n} <
k^{1-\eps}$ for all $n<k^{1+\eps}$, and this follows from assumption
(\ref{eqBBchildrenAssumps}) 
and Markov's inequality.
\end{pf}

We now finish the proof of Proposition~\ref{prfixeddrift}. Because the
path of $\bV^k$ is linearly interpolated between consecutive integers, and
since for any $A>0$ the paths of $X$ are a.s. uniformly continuous on
$[0,A]$, the proposition will follow if we establish that for any
$A,\eps>0$,
%
\begin{equation}\label{eqendprooffd}
\pp\biggl(\sup_{t \in[0,A]} \biggl\vert\frac{1}{k} V^k_{[k^2 t]} -
X_t\biggr\vert
>\eps
\biggr) \xrightarrow{k\to\infty} 0.
\end{equation}

Consider first $t$ such that $k^2t\in\Z_+$. Then there is some vertex
$v\in\bT^k$ so that $\bx_v=k^2t$. Let $u\in\cT^k$ be as defined
above, and
suppose $k^2s=x_u$. Then (\ref{eqconvforGW}) implies that
$\vert k^{-1}y_u-X_s\vert$ is uniformly small. Lemma~\ref{Lhvdisp} implies
that with
high probability $\vert k^2s-k^2t\vert=\vert x_u-\bx_v\vert\le
3k^{1+\eps}$ for
all such $v$.
Thus, $\vert s-t\vert\le k^{-1+\eps} \ll1$. Since paths of $X$ are uniformly
continuous, we find $\vert X_s-X_t\vert$ is uniformly small, and so
$\vert k^{-1}y_u-X_t\vert$
is uniformly small. Finally, Lemma~\ref{Lhvdisp} states that $\vert
y_u-\by _v\vert\leq
C$, so the scaled vertical distance is also~$o(1)$.

Next, assume $m<k^2t<m+1$. Then $\bV^k(k^2t)$ lies between $\bV^k(m)$ and
$\bV^k(m+1)$. Since both of these are close to the corresponding
values of
$X$, and since $X$ is uniformly continuous (and the pertinent points differ
by at most $k^{-2}$), we may interpolate to find that (\ref{eqendprooffd})
holds for all $t<A$.


\subsection{\texorpdfstring{Proofs of Corollaries \protect\ref{cornkexc} and \protect\ref{corheight,fixed}}
{Proofs of Corollaries 4.4 and 4.5}}

\mbox{}

\begin{pf*}{Proof of Corollary~\ref{cornkexc}}
By Proposition~\ref{prfixeddrift}, the limit of the process
$(\smash{\frac{1}{k} V_{[k^2t]}^k})_{t \le\tau^{(n_k)}}$ must
take the form\vspace*{2pt}
$(X_{t})_{t\le\tau}$ for some possibly random time $\tau$, and,
furthermore, $X_{\tau}=0$. We need to show that $\tau= \tau_{mx} =
\inf
\{ t \ge0\dvtx -Y_t = mx \}$.

In the special case of the tree $\cT^k$ we note that the infimum process
$\underline{\cU}^k$ records the index of the last visited vertex
along the backbone. Therefore, $\tau^{(n_k)}$ is the time at which
$\cU^k$ first reaches $-n_k$, and by assumption $n_k\sim xk$.
Using the a.s. convergence of $\frac{1}{k} \cU^k([k^2t])$ toward $Y_t$,
along with the fact that for any fixed $x>0$, $\eps>0$, one has a.s.
$\underline{Y}_{\tau_x -\eps}> -x > \underline{Y}_{\tau_x +\eps
}$, we
deduce that a.s., $\tau^{(n_k)}/k^2 \to\tau_x$. It then follows that
\[
\biggl( \frac{1}{k} \mathcal{V}^{k}_{[k^2 t]}, t \le
\bigl(\tau^{(n_k)}+1\bigr)/k^2 \biggr) \xrightarrow{k \to\infty}^{\mathrm
{a.s.}} (
X_t, t \le\tau_{x} ).
\]
Since, in this case, $m_k = \sigma w_k \to m=1$, this implies the
corollary for this special distribution.

The general case then follows as a consequence of excursion theory. Indeed,
$(-\underline{Y}_{ t}, t \ge0)$ can be chosen to be the local time
at its
infimum of $Y$ (see, e.g.,~\cite{RogWill1994v2}, Paragraph VI.8.55),
that
is, a local time at $0$ of $X$, since excursions of $Y$ away from its
infimum match those of $X$ away from $0$. However, if $N_t^{(\veps)}$
denotes the number of excursions of $X$ away from $0$ that are completed
before $t$ and reach level $\veps$, then $(\lim_{\veps\to0} \veps
N_t^{(\veps)}, t\ge0)$ is also a local time at $0$ of $X$, which
means that it has to be proportional to $(-\underline{Y}_{ t}, t \ge
0)$ (cf.,
e.g.,~\cite{Blumenthal1992}, Section III.3(c) and Theorem VI.2.1).
In other words, there exists a constant $c>0$ such that for any $t \ge0$,
\[
\lim_{\veps\to0} \veps N_t^{(\veps)} = -c \underline{Y}_{ t}.
\]
In the special case when $\mathcal{Z}_{k,n} = \Bin(\sigma, w_k)$ we have
already proven the corollary. In particular, the number
$\mathcal{N}^{k,(\veps)}$ of excursions of $(\frac{1}{k}\cU
_{k^2t}^{k}, t
\le\tau^{(n_k)})$ which reach level $\veps$ is such that, when letting
$k \to\infty$ and then $\veps\to0$, we have $ \veps\mathcal
{N}^{k,(\veps)} \to c
x $.

Let\vspace*{1pt} $N^{k,(\veps)}$ be the number of excursions of $(\frac{1}{k}
\smash{V_{[k^2t]}^{k}}, t \le\tau^{(n_k)})$ which reach level~$\veps$.
It follows from Proposition~\ref{prfixeddrift} that, in distribution,
$N^{k,(\veps)} \to N_{\tau}^{\veps}$ as $k \to\infty$.

However, by assumption $\mathcal{A}$ we can use the law of large
numbers for the sequences $(Z_{k,n})_{n \in\nn}$ along with the fact
that $m_k \to m$, to ensure that $ \veps N^{k,(\veps)} \underset{k
\to
\infty}{\sim} m \veps
\mathcal{N}^{k,(\veps)}$. Therefore, letting first $k\to\infty$, then
$\veps\to0$, we find $ \veps N^{k,(\veps)} \to m c x $.

From the fact that $\tau^{(n_k)}$ are
stopping times, we deduce that $\tau$ itself is a stopping time. Since
$X_{\tau}=0$, for any $s>0$, the local time at $0$ of $X$ (i.e.,
$-\underline{Y}$)
increases on the interval $(\tau, \tau+s)$.\vadjust{\goodbreak} It follows that for a certain
real-valued random variable $R$, $\tau= \tau_R = \inf\{ t \ge0\dvtx -Y_t
= R \}$, and we deduce that, in distribution, $R = mx$, that is, $\tau
= \tau_{mx}$
\end{pf*}
\begin{pf*}{Proof of Corollary~\ref{corheight,fixed}}
The relation between the height function and the Lukaciewicz path is well
known; see, for example,~\cite{DuqLeG2002}, Theorem 2.3.1 and equation~(1.7).
Combining with Proposition~\ref{prfixeddrift}, one finds that the
height process of the \textit{sequence of trees} emerging from the backbone
of $\bT^k$ converges when rescaled to the process
\[
\frac{2}{\gamma} (Y_t - \underline{Y}_{ t}).
\]
Moreover, the difference between the height process of $\bT^k$ and that
of the sequence of trees emerging from the backbone of $\bT^k$ is simply
$-\underline{U}^{k}$. As in the proof of Corollary~\ref{cornkexc}, one
has weakly in $\cC(\R_+,\R)$,
\[
\biggl(-\frac{1}{k} \underline{U}^k_{[k^2t]}\biggr)_{t\ge0}
\xrightarrow{k\to\infty} \biggl(-\frac{1}{m} \underline{Y}_{  t}\biggr)_{t
\ge0},
\]
and (\ref{eqheightfull}) follows. The proof of (\ref{eqheightnk}) is
similar.
\end{pf*}

In fact,~\cite{DuqLeG2002}, Corollary 2.5.1,
states the joint convergence of
Lukaciewicz paths, height and contour functions. It is thus easy to deduce
a strengthening of Corollary~\ref{corheight,fixed} to get the joint
convergence.

\subsection{Two-sided trees}
\label{subtwotrees}

The limit appearing in Proposition~\ref{prfixeddrift} retains very
minimal information about the sequence $Z_k$. If two trees (or two
sides of a tree) are constructed as above using independent $\theta$'s
but dependent sequences of $Z$'s, the dependence between two sequences
might\vspace*{1pt} disappear in the scaling limit. For $k \in\Z_+$, let
$w_k \in[0,1/\sigma]$, and denote by $(\theta_n^k)_{n \in\Z_+}$ and
$(\widetilde{\theta}_n^k)_{n \in \zz _+}$ two \textit{independent}
sequences of i.i.d. subcritical Galton--Watson trees with branching law
$\Bin(\sigma, w_k)$. We let $Z_k$, $\widetilde{Z}_k$ be two sequences
of random variables taking values in $\Z_+$ such that the pairs
$(Z_{k,n}, \widetilde{Z}_{k,n})$ are independent for different $n$;
however, we allow $Z_{k,n}$ and $\widetilde{Z}_{k,n}$ to be correlated.

Let\vspace*{1pt} $\bT^k, \widetilde{\bT}^k$ designate, respectively, the
$(Z_k,\theta^k)$-tree, $(\widetilde{Z}_k, \widetilde{\theta
}^k)$-tree as
defined in Section~\ref{subnotationsegments}. Let\vspace*{1pt} $\bV^k$,
respectively,
$\widetilde{\bV}^k$, denote their Lukaciewicz paths. We recall that
$\bT^{k,
n_k}$, $\widetilde{\bT}^{k,n_k}$ are, respectively, the $n_k$-truncation
of $\bT^k$, respectively, $\widetilde{\bT}^k$,\vspace*{1pt} and we denote by $\bV^{k,n_k},
\widetilde{\bV}^{k,n_k}$ their respective Lukaciewicz paths.
%
\begin{prop}\label{Pindsides}
Suppose $w_k\leq\sigma^{-1}$ is such that $u = \lim_{k\to\infty}
k(1-\sigma w_k)$ exists, and assume that both sequences of variables
$Z_{k,n}, \widetilde{Z}_{k,n}$ satisfy
assumption~(\ref{eqBBchildrenAssumps}). Then, as $k \to \infty$, weakly
in $\cC(\R_+,\R^2)$
\[
k^{-1}\bigl(\bV_{[k^2 t]}^{k}, \widetilde{\bV}_{[k^2t]}^k\bigr)_{t\ge0}
\xrightarrow{k\to\infty}
(X_t, \widetilde{X}_t)_{t\ge0},
\]
where the processes $X,\widetilde{X}$ are two \textit{independent} reflected
Brownian motions with drift $-u$ and diffusion coefficient $\gamma$.

Moreover, if $n_k/k \to x>0$, $m_k \to m$, $\widetilde{m}_k \to
\widetilde{m}$ as
$k \to\infty$, we have
\[
k^{-1}\bigl(\bV_{[k^2 t]}^{k,n_k}, \widetilde\bV_{[k^2 t]}^{k,n_k}\bigr)_{t
\le\tau^{(n_k)}/k^2}
\xrightarrow{k\to\infty}
(X_t, \widetilde X_t)_{t\le\tau_{mx}}.
\]
\end{prop}

The proof is almost identical to that of Proposition~\ref{prfixeddrift}.
When the sequences $Z_k,\widetilde Z_k$ are independent with
$\Bin(\sigma,w_k)$ elements the result follows from
Proposition~\ref{prfixeddrift}. For general sequences, the coupling of
Section~\ref{secfixeddriftproof} shows that the sides have the same
joint scaling limit.

\subsection{Scaling the IIC}
\label{secIIC}

At this point we are already in a position to prove the path
convergence results for the IIC, equations (\ref
{eLukaIICRlimit})--(\ref{eContourIIClimit}) from Theorem~\ref
{TIICresults1}.
As discussed in Section~\ref{subIICrecall}, the IIC is the result
of setting $w_k = 1/\sigma$ in the
above constructions. Specifically, let us first suppose that $Z$ is a
sequence of i.i.d. $\Bin(\sigma-1, 1/\sigma)$ variables and
$(\theta
_n)_n$ is a sequence
of i.i.d. $\Bin(\sigma, 1/\sigma)$ Galton--Watson trees. Let $\bT$
be a
$(Z,\theta)$-tree: then $\bT$ has the same distribution as
$\IIC_{\cR}$.

The convergence of the rescaled Lukaciewicz path encoding this sin-tree
to a
time-changed reflected Brownian path is thus a special case of
Proposition~\ref{prfixeddrift}. The scaling limits of the height and
contour functions follow from Corollary~\ref{corheight,fixed}. We have
$m=\gamma$, so both limits are $\frac{2}{\gamma} B_{\gamma t} -
\frac{3}{\gamma} \underline{B}_{\gamma t}$.

For the IIC with unconditioned backbone, let $Y_n$ be i.i.d. uniform
in $\{1,\ldots,\sigma\}$. Let
$Z_n\sim\Bin(Y_n-1,1/\sigma)$ and $\widetilde Z_n\sim
\Bin(\sigma-Y_n,1/\sigma)$, independent conditioned on $Y_n$ and
independently of all other $n$. Moreover, suppose that $\theta,
\widetilde{\theta}$ are two independent sequences of i.i.d. $\Bin
(\sigma,
1/\sigma)$ Galton--Watson trees. Then, $\bT$ and $\widetilde\bT$ are jointly
distributed as $\IIC_G$ and $\IIC_D$.

Since in this case $m=\widetilde m=\gamma/2$, from
Proposition~\ref{Pindsides} we see that the rescaled Lukaciewicz paths
encoding these two trees converge toward a pair of independent
time-changed reflected Brownian motions, and similarly for the
right/left height and contour functions of the IIC. 

The proofs of the remaining parts of Theorems~\ref{TIICresults1} and
\ref{TIICresults2} are identical to the proofs for the IPC, which are
given in the next two sections.

\section{Bottom-up construction} \label{secproofmain}

\subsection{Right grafting and concatenation}

\begin{defn}
Given a finite plane tree, its \textit{rightmost leaf} is the maximal
vertex in the lexicographic order; equivalently, it is the last vertex to
be reached by the contour process, and is the rightmost leaf of the
subtree above the rightmost child of the root.
\end{defn}
%
\begin{defn}
The \textit{right-grafting} of a plane tree $S$ on a finite plane tree~$T$,
denoted $T\oplus S$, is the plane\vadjust{\goodbreak} tree resulting from identifying the root
of $S$ with the rightmost leaf of $T$. More precisely, let $v$ be the
rightmost leaf of $T$. The tree $T \oplus S$ is given by its set of
vertices $\{u\dvtx u \in T \setminus\{v\} \mbox{ or } u=vw, w \in S\}$.
\end{defn}

Note, in particular, that the vertices of $S$ have been relabeled in
$T\oplus S$ through the mapping from $S$ to $T \oplus S$ which maps
$w$ to $vw$.
%
\begin{defn}
The \textit{concatenation} of two functions $V_1,V_2 \in\cS$ with $V_2(0)=0$,
denoted $V=V_1\oplus V_2$, is defined by
\[
V(t) = \cases{ V_1(t), &\quad $t\leq\zeta(V_1)$, \cr
V_1(\zeta(V_1)) + V_2\bigl(t-\zeta(V_1)\bigr), &\quad $t\in
[\zeta(V_1),\zeta(V_1)+\zeta(V_2)]$.}
\]
\end{defn}
%
\begin{lemma}\label{Lconcatcommute}
If each $Y_i\in\cS$ attains its minimum at $\zeta(Y_i)$, then
\[
\bigoplus(Y_i - \underline Y_i) = \bigoplus Y_i - \underline
{\bigoplus
Y_i}.
\]
\end{lemma}

The following is straightforward to check, and may be used as an alternate
definition of right-grafting.
%
\begin{lemma}\label{Lgraft}
Let $R=T\oplus S$ be finite plane trees, and denote the Lukaciewicz path
of $R$ (resp., $T,S$) by $V_R$ (resp., $V_T,V_S$). Let $V'_T$ be $V_T$
terminated at $\#T$ (i.e., without the final value of $-$1). Then
$V_R=V'_T\oplus V_S$.
\end{lemma}

Consider a sin-tree $T$ in which the backbone is the rightmost path
(i.e.,
the path through the rightmost child at each generation). Given some
increasing sequence $\{x_i\}$ of vertices along the backbone, we cut the
tree at these vertices: let
\[
\widetilde{T}_i:= \{v \in T\dvtx x_i \le v \le x_{i+1} \}.
\]
Thus, $\widetilde{T}_i$ contains the segment of the backbone
$[x_i,x_{i+1}]$ as well as all the subtrees connected to any vertex of
this segment except $x_{i+1}$. We let $T_i$ be $\widetilde T_i$
rerooted at~$x_i$ (formally, $T_i$ contains all $v$ with $x_i
v\in\widetilde T_i$). It is clear from the definitions that $T =
\bigoplus_{i=0}^\infty T_i$. Note that apart from being increasing, the
sequence $x_i$ is arbitrary.




\subsection{\texorpdfstring{IPC structure and the coupling: Proof of Theorem \protect\ref{Trightscale}}
{IPC structure and the coupling: Proof of Theorem 1.2}} \label{subcoupling}

In this section we prove Theorem~\ref{Trightscale}.

Recall the $\widehat{W}$-process introduced in Section \ref
{subrecall} and
the convergence (\ref{eWtoL}). The $\widehat{W}$-process is constant
for long
stretches, giving rise to a partition of $\cR$ into what we shall call
segments. Each segment consists of an interval of the backbone along which
$\widehat{W}$ is constant, together with all subtrees attached to the
interval. To be precise, define $x_i$ inductively by $x_0=0$ and
$x_{i+1} =
\inf_{n>x_i} \{\widehat{W}_n>\widehat{W}_{x_i}\}$. With a slight abuse,
we also let
$x_i$ designate the vertex along the backbone at height $x_i$.\vadjust{\goodbreak}


The backbone is the union of the intervals $[x_i,x_{i+1}]$ for all
$i\geq0$, and the rest of the IPC consists of subcritical percolation
clusters attached to each vertex of the backbone $y\in[x_i,x_{i+1})$. We
can now write
\[
\cR= \bigoplus_{i=0}^\infty R_i,
\]
where $R_i$ is the $[x_i,x_{i+1}]$ segment of $\cR$, rerooted at $x_i$.
$R_i$ has a rightmost branch of length $n_i:= x_{i+1}-x_i$. The degrees
along this branch are i.i.d. $\Bin(\sigma-1, \widehat{W}_{x_i})$,
and each
child off the rightmost branch is the root of an independent Galton--Watson
tree with branching law $\Bin(\sigma, \widehat{W}_{x_i})$. In what
follows, we
say that $R_i$ is a $\widehat{W}_{x_i}$-segment of length $n_i$, and
we observe
that these segments fall into the family dealt with in
Section~\ref{secfixed}.

We may summarize the above in the following lemma:
%
\begin{lemma}
Suppose $\widehat{W}$ consists of values $U_i$ repeated $n_i$ times. Then
$R_i$ is distributed as a $U_i$-segment of length $n_i$ and conditioned
on $\{U_i,n_i\}$, the trees $\{R_i\}$ are independent.
\end{lemma}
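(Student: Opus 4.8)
The statement to prove asserts that if $\widehat{W}$ takes the value $U_i$ on the $i$-th stretch of length $n_i$, then $R_i$ is distributed as a $U_i$-segment of length $n_i$, and conditioned on $\{U_i, n_i\}$ the $R_i$ are independent. The plan is to read this off directly from the IPC structure theorem of \cite{AGdHS2008} as recalled in subsection~\ref{sub:recall}, together with the definitions of segments and of $U_i$-segments given just above the lemma. I would present the argument as an unwinding of definitions rather than anything requiring new probabilistic input.

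First I would recall precisely what the structure theorem gives: the IPC (conditioned so its backbone is the rightmost branch, i.e.\ $\cR$) consists of a backbone together with, emerging from each backbone vertex $y$ at height $n$ and from each of the $\sigma-1$ edges leaving $y$ away from the backbone, an independent percolation cluster; the percolation parameter for all clusters attached at height $n$ is $\widehat{W}_n$, and the whole collection of clusters is conditionally independent given the sequence $\proc{\widehat{W}_n}{n\ge0}$. Equivalently, the vertex at height $n$ has $1 + \Bin(\sigma-1,\widehat{W}_n)$ children (the $+1$ being the next backbone vertex), and off-backbone children are roots of i.i.d.\ $\Bin(\sigma,\widehat{W}_n)$ Galton–Watson trees, all conditionally independent given $\widehat{W}$. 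This matches the Definition~\ref{def:Zktrees} of a $(Z,\theta)$-tree with $Z_{n}\sim\Bin(\sigma-1,\widehat{W}_n)$, as observed in subsection~\ref{sub:notationsegments}.

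Next I would fix a realization of $\widehat W$, hence of the sequence $\{x_i\}$ of jump times and of $(U_i,n_i)$ with $x_{i+1}-x_i=n_i$ and $\widehat W\equiv U_i$ on $[x_i,x_{i+1})$. By the right-grafting decomposition of a sin-tree along backbone vertices (the discussion around $\widetilde T_i$ and $T_i$ at the end of subsection~\ref{sub:coupling}, $\cR=\bigoplus_i R_i$ with $R_i$ the $[x_i,x_{i+1}]$ segment rerooted at $x_i$), the segment $R_i$ is exactly the tree built from the backbone interval of length $n_i$ together with the clusters hanging off the $n_i$ backbone vertices $x_i,\dots,x_{i+1}-1$. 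Conditionally on $\widehat W$, those clusters all have parameter $U_i$ and are independent of the clusters used for any other segment (disjoint sets of clusters in the structure theorem's conditionally independent family). Hence, conditionally on $\widehat W$, $R_i$ is precisely a $\Bin(\sigma-1,U_i)$-$(Z,\theta)$-truncation of length $n_i$ — a $U_i$-segment of length $n_i$ in the terminology introduced before the lemma — and the $\{R_i\}_i$ are conditionally independent. Finally, since the conditional law of $R_i$ given $\widehat W$ depends on $\widehat W$ only through $(U_i,n_i)$, one may condition further down to $\{U_i,n_i\}$: the conditional law of $R_i$ given $\{U_j,n_j\}_j$ equals that given $\widehat W$, so $R_i\sim U_i$-segment of length $n_i$ and the $\{R_i\}$ remain independent given $\{U_i,n_i\}$.

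The only real point requiring care — and the step I would flag as the main (mild) obstacle — is the conditioning bookkeeping: one must check that the off-backbone structure on the half-open interval $[x_i,x_{i+1})$ is what gets rerooted into $R_i$ (in particular that the clusters at $x_{i+1}$ belong to $R_{i+1}$, not $R_i$, matching the $i$-truncation convention of Definition~\ref{itruncation} and the cutting convention $\widetilde T_i=\{v: x_i\le v\le x_{i+1}\}$ with $x_{i+1}$ excluded from the subtrees), and that the family of percolation clusters is genuinely conditionally i.i.d.\ given $\widehat W$ so that partitioning them by segment preserves conditional independence. Both are immediate from \cite{AGdHS2008} once stated carefully, so the proof is essentially a matter of aligning definitions; I would write it in three or four sentences citing the structure theorem and the grafting decomposition.
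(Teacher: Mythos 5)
Your proposal is correct and matches the paper's approach: in the paper the lemma has no separate proof at all, being explicitly presented as a summary of the immediately preceding paragraph, which unwinds the structure theorem of \cite{AGdHS2008} in exactly the way you describe (backbone degrees $\Bin(\sigma-1,\widehat W_{x_i})$, off-backbone clusters i.i.d.\ $\Bin(\sigma,\widehat W_{x_i})$ Galton–Watson trees, conditional independence across segments from disjointness of the cluster families). One small remark on your final ``conditioning down'' step: it is vacuous here, since the sequence $\{(U_i,n_i)\}_{i\ge0}$ determines $\widehat W$ completely, so the two $\sigma$-fields coincide and there is nothing further to check.
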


A difficulty we must deal with is that in the scaling limit there is no
first segment, but rather a doubly infinite sequence of segments.
Furthermore, the initial segments are far from critical, and so need to be
dealt with separately. This is related to the fact that the Poisson lower
envelope process $L(t)$ diverges near 0 and has no ``first segment.''
Because of
this we restrict ourselves at first to a slightly truncated invasion
percolation cluster. For any $\beta>0$ we define
\[
x_0^\beta= \min\{x\dvtx
\sigma\widehat{W}_x > 1 - \beta/k\},\qquad
x_{i+1}^\beta= \min\{\smash{x > x_i^\beta\dvtx \widehat{W}_x >
\widehat {W}_{x_i^\beta}}\}.
\]
Note
that $x^\beta_0=x_m$ for some $m$ and that $x^\beta_i=x_{m+i}$ for
the same $m$ and all~$i$.


Since we have convergence in distribution of the process $\widehat{W}$,
we may couple
the IPCs for different $k$'s so that the convergence holds a.s. (This
means that the random tree $\cR$ depends on $k$; we will leave this
dependence implicit.) More precisely, let $(j_i^\beta)_{i\in\Z}$ be the
sequence of jump times for $\{L(t)\}$, indexed such that $L(j_0^\beta
)<\beta<L(j_{-1}^\beta)$ a.s. [We may do this since a.s. $\beta$ is
not in the range of $L(t)$.] By the convergence (\ref{eWtoL}) and the
Skorohod representation\vspace*{1pt} theorem, we may assume
that a.s. for any $t\notin J$ we have $k^{-1}(1-\sigma
\widehat{W}^k_{[kt]})\xrightarrow{k\to\infty}L(t)$. Indeed, we will
assume further that $k^{-1} x_i^\beta\to j_i^\beta$ a.s. for each $i$.
This slightly stronger statement follows from (\ref{eRateAsymps}),
which shows that $(k(1-\sigma\widehat W_{[kt]}))$ and $L(t)$ have
asymptotically the same total jump rate. In other words, there are no
``small'' jumps of $\widehat W$ that disappear in the scaling limit $L(t)$.


Denote by $V^\beta_i$ (implicitly depending on $k$) the Lukaciewicz path
corresponding to the $i$th segment $R^\beta_i$ in $\cR^\beta$. For any
$\beta,i$, the $i$th segment has associated percolation parameter
$w_i^\beta$
satisfying $k(1-\sigma w_i)\xrightarrow{k\to\infty} L(j_i^\beta)$ and
length $n_i^\beta$ satisfying $k^{-1}n^\beta_i\rightarrow
j_{i+1}^\beta-j_i^\beta$. By Corollary~\ref{cornkexc}, we have the
convergence in
distribution
%
\begin{equation}
\label{eSegmentConvergence}
\bigl( k^{-1} V^\beta_i(k^2 t), 0\le t \le\tau^{(n_i^\beta)}
\bigr)
\xrightarrow{k\to\infty}\bigl( X_t, 0\le t \le\tau_{\gamma
(j_{i+1}^\beta-j_i^\beta)}\bigr),
\end{equation}
where $X_t = Y_t-\underline Y_{ t}$, and $Y_t$ solves
\[
d Y_t = \sqrt{\gamma}   \,dB_t - L(j_i^\beta)  \,dt.
\]
As in the previous section, $\tau^{(n_i^\beta)}$ denotes the lifetime of
$V^\beta_i$ [i.e., its $(n_i^\beta)$th return to $0$] and $\tau_{y}$
is the
hitting time of $-y$ by $Y$.

Because the convergence in (\ref{eSegmentConvergence}) holds for all
$\beta,i\in\N$, we may construct
the coupling of the probability spaces so that the convergence is also
almost sure, and this is the final constraint in our coupling.
%
\begin{lemma} \label{Lbeta}
Fix $\beta>0$. In the coupling described above we have, almost surely,
the scaling limit
\[
k^{-1}\cV^\beta(k^2 t) \xrightarrow{k\to\infty} X_t,
\]
where
$X_t = \cY^\beta_t-\underline\cY^\beta_{ t}$, and $\cY^\beta$ solves
\begin{equation}
\label{eYbetaEquation}
\cY^\beta_t = \sqrt{\gamma} B_t - \int_0^t L\biggl(j^\beta_0 -
\frac{1}{\gamma} \underline\cY^{\beta}_{  s}\biggr) \,ds.
\end{equation}
\end{lemma}
\begin{pf}
Solutions of the equation for $\cY^\beta$ are a concatenation of
segments. In each segment the drift is fixed, and each segment terminates
when $\underline\cY^\beta$ reaches a certain threshold. The
corresponding segments of $X$ exactly correspond to the scaling limit of
the tree segments $R^\beta_i$.

Lemma~\ref{Lbeta} then follows from Lemmas~\ref{Lconcatcommute} and
\ref{Lgraft}.
\end{pf}
%
\begin{lemma} \label{Lbetatoinfty}
Almost surely,
\[
(\cY^\beta_t, t>0) \xrightarrow{\beta\to\infty} \cY_t,
\]
where $\cY$ solves
\[
\cY_t = \sqrt{\gamma} B_t - \int_0^t L\biggl(-\frac{1}{\gamma
}\underline
\cY_{  s}\biggr) \,ds.
\]
\end{lemma}
\begin{pf}
Consider the difference between the solutions for a pair $\beta<\beta'$.
We have the relation
\[
\cY^{\beta'} = Z \oplus\cY^\beta,\vadjust{\goodbreak}
\]
where $Z$\vspace*{-2pt} is a solution of $ Z_t = \sqrt{\gamma} B_t - \int_0^t
L(j_0^{\beta'} -\frac{1}{\gamma}\underline Z_{  s})\,ds $,
killed when $\underline Z$ first reaches $\gamma(j^{\beta'}_0 -
j^\beta_0)$. In particular, $Z$ is a stochastic process with drift in
$[-\beta',-\beta]$ (and quadratic variation $\gamma$). Thus, to show that
$\cY^\beta$ is close to $\cY^{\beta'}$, we need to show that $Z$ is small
both horizontally and vertically, that is, $\zeta(Z)$ is small, as is
$\Vert Z\Vert_\infty$.

The vertical translation of $\cY^\beta$ is $\sqrt{\gamma}
k^{-1}(x^\beta_0-x^{\beta'}_0)$, which is at most $k^{-1}x^\beta_0$. From
\cite{AGdHS2008}
we know that this tends to 0 in probability as
$\beta\to\infty$. This convergence is a.s. since $x^\beta_0$ is
nonincreasing in $\beta$.

The values of $Z$ are unlikely to be large, since $Z$ has a nonpositive
(in fact, negative) drift and is killed when $\underline Z$ reaches some
negative level close to 0.

Finally, there is a horizontal translation of $\cY^\beta$ in the
concatenation. This translation is just the time at which $\underline Z$
first reaches $\gamma(j^{\beta'}_0-j^\beta_0)$, which is also small,
uniformly in $\beta'$.
\end{pf}

Theorem~\ref{Trightscale}(\ref{equ1}) is now a simple consequence of
Lemmas~\ref{Lbeta}
and~\ref{Lbetatoinfty}. Indeed, the process $\cY-\underline{\cY}$
has the
same law as the right-hand side of (\ref{equ1}), due to the scale invariance of
solutions of (\ref{eqSDE}). We shall note that, in fact, $\cY$ is the
limit of
the rescaled Lukaciewicz path coding the sequence of off-backbone trees.

The same argument using Corollary~\ref{corheight,fixed} instead of
Corollary~\ref{cornkexc} gives the convergence of the height function.

Finally, convergence of contour functions is deduced from that of
height functions by a routine argument (see, e.g.,~\cite{LeGall2005},
Section 1.6).

\subsection{\texorpdfstring{The two-sided tree: Proof of Theorem \protect\ref{Ttwosidedscale}}
{The two-sided tree: Proof of Theorem 1.3}} \label{subtwo-sided}

For convenience we use the shorter notation $\cT$ to designate the IPC,
and we recall the left
and right trees $\cT_G$ and $\cT_D$ as introduced in
Section~\ref{subenc2}. The two trees $\cT_G$ and $\cT_D$ obviously
have the same distribution,
but are not independent. As in the previous section, we may cut these two
trees into segments along which the $\widehat{W}$-process is constant. More
precisely,
\[
\cT_G = \bigoplus_{i=0}^{\infty} T_G^i, \qquad \cT_D =
\bigoplus_{i=0}^{\infty} T_D^i,
\]
where the distribution of $T_D^i, T_G^i$ can be made precise as follows.

Let\vspace*{1pt} $(\theta_n^i)_{n}, (\widetilde\theta_n^i)_{n}$ be sequences
of Galton--Watson trees with branching law $\Bin(\sigma,
\widehat{W}_{x_i})$, all independent. Let $Y_n, n \in\Z_+$ be
independent uniform on
$\{1,\ldots,\sigma\}$, and, conditionally on $Y_n$, let $Z_n$ be $\Bin(Y_n-1,
\widehat{W}_{x_i})$ and $\widetilde Z_n$ be $\Bin(\sigma
-Y_n,\widehat
{W}_{x_i})$,
where conditioned on the $Y$'s all are independent. Then $T_G^i$ and
$T_D^i$ are distributed as the $n_i$-truncations of the
$(Z,\theta^i)$-tree, respectively, of the $(\widetilde{Z},\widetilde
{\theta}^i)$-tree
(constructed as in Definition~\ref{defZktrees}).

The rest of the proof of Theorem~\ref{Ttwosidedscale} is then
almost identical
to that of Theorem~\ref{Trightscale}, using Proposition~\ref{Pindsides}
instead of
Proposition~\ref{prfixeddrift}. Note, however, that the expected
number of
children of a vertex on the backbone of $\cT_G$ or $\cT_D$ [i.e., $\E
(Z_n)$ or $\E(\widetilde Z_n)$] is divided by $2$ compared to the
conditioned case. As a consequence, the limits of the rescaled coding paths
of $\cT_G^{\beta}, \cT_R^{\beta}$ will be expressed in terms of solutions
to the equation
%
\begin{equation}
\cY_t^{\beta} = \sqrt{\gamma} B_t - \int_0^t L\biggl( j_0^{\beta}
-\frac{2}{\gamma} \underline{\cY}^{\beta}_{  s}\biggr) \,ds
\end{equation}
instead of the equation (\ref{eYbetaEquation}) 
from Lemma~\ref{Lbeta}.
Further details are left to the reader.


\subsection{\texorpdfstring{Convergence of trees: Proof of Theorem \protect\ref{Tlimexist}}
{Convergence of trees: Proof of Theorem 1.1}}
\label{subtrees}

In this section we prove weak convergence of the trees as 
metric
spaces. We refer to~\cite{LeGall2005}
for background on the theory of continuous
real trees.
\begin{pf*}{Proof of Theorem~\ref{Tlimexist}}
To prove convergence in the pointed Gromov--Hausdorff topology, it
suffices to prove that the ball of radius $R$ in the rescaled metric
converges in the ordinary Gromov--Hausdorff sense (note that these balls
are all compact a.s.). To simplify the argument, we will consider $\cR
$, the IPC conditioned to have its backbone on the right, which does
not affect the metric structure.

For compact real trees $T_g,T'_g$ coded by compactly supported contour
functions $g,g'$, the inequality
%
\begin{equation}\label{GHcontourbound}
d_{\mathrm{G\mbox{-}H}}(T_g,T'_g)\leq2\Vert g-g'\Vert_\infty
\end{equation}
relates convergence of contour functions to convergence of metric
spaces (see, e.g.,~\cite{LeGall2005}, Lemma 2.4). Therefore, fix $R>0$
and write
\[
g_k(t)=k^{-1} C_{\cR}(2k^2 t),\qquad
T_{k,R}=\sup\{t\dvtx g_k(t)\leq R\}.
\]
By Theorem~\ref{Trightscale}, $g_k$ converges in distribution as
$k\to
\infty$.
%
\begin{claim}\label{clTkRconv}
$T_{k,R}$ also converges in distribution.
\end{claim}

Assuming this for the moment, the function defined by
\[
g_{k,R}(t)=
\cases{
g_k(t) \wedge R, &\quad if $t\leq T_{k,R}$,\cr
R+T_{k,R}-t, &\quad if $T_{k,R}<t\leq R+T_{k,R}$,\cr
0, &\quad if $t>T_{k,R}+R$,}
\]
is continuous, has compact support, and converges in distribution as
$k\to\infty$. But $g_{k,R}$ is a contour function coding the part of
$\cR$ within rescaled distance $R$ of the root. By (\ref
{GHcontourbound}) this completes the proof subject to Claim~\ref{clTkRconv}.
\end{pf*}
\begin{pf*}{Proof of Claim~\ref{clTkRconv}}
$T_{k,R}$ is determined by $g_k(t)$, but we have convergence of
$g_k(t)$ only for $t$ in compact subsets of $\R_+$. Therefore, it
suffices to show that $T_{k,R}$ is tight.\vadjust{\goodbreak}

Fix $t>0$ and note that $\P(T_{k,R}>t)$ is the probability that the
tree $\cR$ has more than $k^2 t$ descendants of backbone vertices at
heights at most $k R$. We will bound this by replacing $\cR$ by a
stochastically larger tree $\cT$, namely, the tree $\cT^k$ from
Section~\ref{secfixeddriftproof} with $w_k=p_c$ for each $k$.
Write $\cU$ for the Lukaciewicz path for the corresponding sequence of
off-backbone paths, so that $-\underline{\cU}([k^2 t])$ is the height
of the backbone vertex from which the $[k^2 t]$th vertex is
descended. Thus, $\P(T_{k,R}>t)\leq\P(-\underline{\cU}(k^2 t)\leq k
R)$. But $-\frac{1}{k}\underline{\cU}(k^2 t)\to
-\underline{B}_{\gamma t}$, where $B_t$ is a Brownian motion.
Tightness follows since $-\underline{B}_{\gamma t}\nearrow\infty$ as
$t\to\infty$.
\end{pf*}
\section{\texorpdfstring{Level sizes and volumes: Proofs of Theorems \protect\ref{Tlevels-volume} and \protect\ref{Tlevels}}
{Level sizes and volumes: Proofs of Theorems 1.4 and 1.5}}\label{seclevels}

\mbox{}

\begin{pf*}{Proof of Theorem~\ref{Tlevels-volume}}
We first prove (\ref{eqvolume}). We begin by observing that
\[
\frac{1}{n^2} C[0,an] = \int_0^{\infty} \mathbf{1}_{[0,a]}
\biggl(\frac{1}{n}H_{\cR}(sn^2) \biggr)\,ds.
\]
Our objective is the limit in
distribution
\[
\int_0^{\infty} \mathbf{1}_{[0,a]} \biggl( \frac{1}{n} H_{\cR
}
(sn^2)\biggr) \,ds
\xrightarrow{n\to\infty}
\int_0^{\infty} \mathbf{1}_{[0,a]} (H_s) \,ds.
\]
This almost follows from Theorem~\ref{Trightscale}. The problem
is that $\int\mathbf{1}_{[0,a]} (X_s) \,ds$ is not a continuous function
of the process $X$, and this is for two reasons. First,
because of the indicator function, and second, because the topology is
uniform convergence on
compacts and not on all of $\rr$.

To overcome the second obstacle, we argue that for any $\eps$ there is
an $A$ such
that
\[
\P\biggl(\int_A^\infty\mathbf{1}_{[0,a]} \biggl(\frac{1}{n} H_{\cR
}
(sn^2)\biggr) \,ds \neq0\biggr)
<\eps.
\]
Indeed, in order for the height function to visit $[0,na]$ after time
$n^2A$, the total size of the $[na]$ subcritical trees attached to the
backbone up to height $[na]$ must be at least $[n^2A]$. This probability
is small for $A$ sufficiently large, even if the trees are replaced by
$[na]$ critical trees. Thus, it
suffices to prove that for every $A$
%
\begin{equation}\label{eqfinintconv}
\int_0^A \mathbf{1}_{[0,a]} \biggl(\frac{1}{n} H_{\cR}
(sn^2
)\biggr) \,ds
\xrightarrow{n\to\infty}^{\mathrm{dist}.}
\int_0^A \mathbf{1}_{[0,a]} (H_s) \,ds.
\end{equation}

Next we deal with the discontinuity of $\mathbf{1}_{[0,a]}$ by a standard
argument. We may bound $f_\eps\leq\mathbf{1}_{[0,a]} \leq g_\eps$, where
$f_\eps,g_\eps$ are continuous and coincide with $\mathbf{1}_{[0,a]}$
outside of $[a-\eps,a+\eps]$. Define the operators
\[
F_\eps(X) = \int_0^A f_\eps(X_s) \,ds,\qquad
G_\eps(X) = \int_0^A g_\eps(X_s) \,ds.
\]
Then we have a sandwich
\[
F_\eps\biggl(\frac{1}{n} H_{\cR}(sn^2)\biggr)
\leq\int_0^A \mathbf{1}_{[0,a]} \biggl(\frac{1}{n} H_{\cR}
(sn^2)\biggr) \,ds \leq
G_\eps\biggl(\frac{1}{n} H_{\cR}( sn^2 ) \biggr),
\]
and similarly for $H_s$. By continuity of the operators,
\[
F_\eps\biggl(\frac{1}{n} H_{\cR}(sn^2)\biggr)
\xrightarrow
{n\to\infty}^{\mathrm{dist}.} F_\eps(H_s),\qquad
F_\eps\biggl(\frac{1}{n} H_{\cR}(sn^2)\biggr)
\xrightarrow
{n\to\infty}^{\mathrm{dist}.} F_\eps(H_s).
\]
In the limit we have
\[
G_\eps(H_s)-F_\eps(H_s) \xrightarrow{\eps\to0}^{\mathrm{a.s.}} 0
\]
and since $G_\eps-F_\eps$ is continuous, we also have for any $\delta>0$
\[
\lim_{\eps\to0} \lim_{n\to\infty}
\P\biggl( G_\eps\biggl(\frac{1}{n} H_{\cR}(sn^2)\biggr)
-F_\eps
\biggl(\frac{1}{n} H_{\cR}(sn^2)\biggr) > \delta
\biggr) = 0.
\]
Combining these bounds implies (\ref{eqfinintconv}), and thus (\ref
{eqvolume}).

We now turn to the proof of (\ref{eqlevels}). From (\ref
{eqvolume}), we
know that for any $\eta>0$,
\[
\frac{1}{\eta n^2} C[an, (a+\eta)n] \xrightarrow{n\to\infty}^{\mathrm{dist}.}
\frac{1}{\eta} \int_0^{\infty}
\mathbf{1}_{[a,a+\eta]}(H_s)\,ds.
\]
Thus, (\ref{eqlevels}) will follow if we can prove that for any $\eta>0$,
we have the following limit in probability as $n\to\infty$:
%
\begin{equation}\label{eqetaapprox}
\biggl\vert\frac{\eta n C[an]-C[an,(a+\eta)n]} {\eta n^2} \biggr\vert \stackrel
{\P}{\to} 0.
\end{equation}

For a given vertex $v$, let $h_v$ denote the height of $v$. If $v$ is
not on the backbone, we let $\mbox{perc}(v)$ be the percolation
parameter of the off-backbone percolation cluster to which $v$ belongs.
We now single out the vertex on the
backbone at height $[an]$ and group together vertices at height $[an]$
which correspond to the same percolation parameter.

More precisely, if $\widehat w_1, \widehat w_2, \widehat w_3,\ldots,
\widehat w_{N_n}$ are the distinct
values taken by the $\widehat W$-process up to time $[na]$, we let
\[
C_n^{(w_i)}:= \{ v \in\IPC\setminus\BB\dvtx h_v =[an], \mbox
{perc}(v) = \widehat w_i \},
\]
so that
\[
\mathfrak{C}[an]:= \{ v \in\IPC\dvtx h_v = [an] \} =
\bigcup_{i=1}^{N_n} C^{(\widehat w_i)} \cup\BB_{[an]},\qquad
C[an] = \# \mathfrak{C}[an].
\]

Moreover, any vertex between heights $[an]$ and $[(a+\eta)n]$ in the
IPC descends from one of the vertices of $\mathfrak{C}[an]$. We
let
\begin{eqnarray*}
\cP_n^{(\widehat w_i)} &:=& \bigl\{v \in\IPC\setminus\BB\dvtx [an]
\le h_v
\le(a+\eta)n,  \exists w \in C^{(\widehat w_i)} \mbox{ s.t. } w \le v
\bigr\},\\
\cP_n^{\mathrm{BB}_{[an]}} &:=& \bigl\{ v \in\IPC\dvtx [an] \le h_v \le
(a+\eta)n,
\BB_{[an]} \le v \bigr\}.
\end{eqnarray*}
In particular, $C_n^{(w_i)} \subset\cP_n^{(w_i)}$ and vertices of the
backbone between heights
$[an]$ and $[(a+\eta)n]$ are contained in $\cP_n^{\mathrm{BB}_{[an]}}$. Moreover,
\[
\mathfrak{C}[an,(a+\eta)n]:= \{ v \in\IPC\dvtx [an] \le h_v \le
(a+\eta)n \} =
\cP_n^{\mathrm{BB}_{[an]}} \cup\bigcup_{i=1}^{N_n} \cP_n^{(w_i)}.
\]
However, the number of distinct values of percolation parameters
which one sees at height $[an]$ remains bounded with arbitrarily
high probability.
%
\begin{claim} \label{clfiniteW}
For any $\epsilon> 0$, there is $A>0$ such that, for any $n \in
\N$,
\[
\P\bigl[ \# \bigl\{ i \in\{1,\ldots,N_n\}\dvtx \bigl\vert C_n^{(w_i)}\bigr\vert \neq0
\bigr\} >A
\bigr] \le\epsilon.
\]
\end{claim}

From~\cite{AGdHS2008}, Proposition 3.1,
the number of distinct
values the $\widehat{W}$-process takes between $[na]/2$ and $[na]$ is
bounded, uniformly in $n$, with arbitrarily high probability.
Furthermore, it is well known that with arbitrarily high
probability, among $[na]/2$ critical Galton--Watson trees, the number
which reach height $[na]/2$ is bounded, uniformly in $n$. It
follows that the number of clusters rising from the backbone at
heights $\{0,\ldots,[na]/2\}$ and which possess vertices at height
$[na]$ is, with arbitrarily high probability, also bounded for
all $n$. The claim follows.
%
\begin{claim} \label{clBBprogeny} For any $\eta>0$, in
probability,
\[
\lim_{n \to\infty} \biggl\vert\frac{1}{\eta n^2} \cP_n^{\mathrm{BB}_{[an]}}
\biggr\vert =0.
\]
\end{claim}

Fix $\eta$. We observe that $\cP_n^{\mathrm{BB}_{[an]}}$ is bounded by the total
progeny up to height $\eta n$ of $\eta n$ critical Galton--Watson
trees. If $\vert B\vert$ denotes a reflected Brownian motion and
$l_t^0(\vert B\vert)$ its local time at $0$ up to $t$, we then deduce
from a
convergence result for a sequence of such trees (cf. formula (7) of~\cite{LeGall2005})
that
for any $\epsilon>0$,
\[
\limsup_{n \to\infty} \P\biggl[ \frac{1}{\eta n^2}
\cP_n^{\mathrm{BB}_{[an]}} > \epsilon\biggr] \le\P\biggl[
\frac{1}{\eta} \inf\{t>0\dvtx l_t^0(\vert B\vert)> \eta\} > \epsilon
\biggr],
\]
and the claim follows from the fact that $( \inf\{ t>0\dvtx
l_t^0(\vert B\vert) > u \}, u \ge0 )$ is a half stable subordinator.
%
\begin{claim} \label{clpieces}
For any $t\in(0,a)$, $\eta>0$, in probability,
\[
\lim_{n \to\infty} \biggl\vert\frac{ \cP_n^{(\widehat
{W}_{[nt]})}}{\eta n^2} - \frac{\#(C_n^{(\widehat{W}_{[nt]})})}{n}
\biggr\vert =0.
\]
\end{claim}

Fix $t,\eta$, and define $w_n:= \widehat{W}_{[nt]}$. We have
\begin{eqnarray*}
&&
\P\biggl[ \biggl\vert\frac{ \cP_n^{(w_n)}}{\eta n^2} - \frac{\#
(C_n^{(w_n)})}{n} \biggr\vert > \epsilon\biggr] \\
&&\qquad \le
\P\bigl[ \#\bigl(C_n^{(w_n)}\bigr) > n \epsilon^{-2} \bigr] + \P
\biggl[ \biggl\vert\frac{ \cP_n^{(w_n)}}{\eta n^2} - \frac{\#
(C_n^{(w_n)})}{n} \biggr\vert > \epsilon,  \#\bigl(C_n^{(w_n)}\bigr) <
\epsilon^{2} n \biggr] \\
&&\qquad\quad{} +
\sum_{k=[\epsilon^2 n]}^{[\epsilon^{-2}
n]} \P\bigl(\#\bigl(C_n^{(w_n)}\bigr)=k\bigr) \P\biggl[ \biggl\vert\frac{ \cP
_n^{(w_n)}}{\eta n^2} - \frac{\#(C_n^{(w_n)})}{n} \biggr\vert >
\epsilon \Big\vert \#\bigl(C_n^{(w_n)}\bigr)=k\biggr].
\end{eqnarray*}

Using a comparison to critical trees as in the previous argument, the
first two terms in the sum above go to $0$ as $n \to\infty$.
Furthermore, from~\cite{DuqLeG2002}, Corollary 2.5.1, we know that,
conditionally on the processes $\widehat{W}$, $L$, for any $u>0$, the
level sets of $[un]$ subcritical Galton--Watson trees with branching
law $\operatorname{Bin}(\sigma, w_n)$ converge to the local time
process of a reflected drifted Brownian motion $(\vert X_s\vert,
s\ge0)$, with drift $L(t)$, stopped at $\tau_u$. Therefore, for any
$u>0$,
\begin{eqnarray*}
&& \lim_{n \to\infty} \P\biggl[ \biggl\vert\frac{ \cP_n^{(w_n)}}{\eta
n^2} - \frac{\#(C_n^{(w_n)})}{n} \biggr\vert >
\epsilon \Big\vert \#\bigl(C_n^{(w_n)}\bigr)= [nu] \biggr] \\
&&\qquad =
\P\biggl[ \biggl\vert\frac{1}{\eta} \int_{0}^{\tau_u} \mathbf
{1}_{[0,\eta]}(\vert X_s\vert) \,ds - l_t^0(\vert X\vert) \biggr\vert >
\epsilon
\biggr],
\end{eqnarray*}
which for any $\epsilon>0$ goes to $0$ as $\eta\to
0$. Thus, by dominated convergence,
\begin{eqnarray*}
&&\lim_{\eta\to0} \limsup_{n \to\infty}
\sum_{k=[\epsilon^2 n]}^{[\epsilon^{-2} n]} \P\bigl(\#\bigl(C^{(w_n)}\bigr)=k\bigr)
\\
&&\hspace*{57.6pt}\qquad{}\times
\P\biggl[ \biggl\vert\frac{ \cP_n^{(w_n)}}{\eta n^2} - \frac{\#
(C^{(w_n)})}{n} \biggr\vert > \epsilon \Big\vert
\#\bigl(C^{(w_n)}\bigr)=k\biggr] =0.
\end{eqnarray*}
Claim~\ref{clpieces} follows.

From our decompositions of $\mathfrak{C}[an,(a+\eta)n],
\mathfrak{C}[an]$, and
Claims~\ref{clfiniteW},~\ref{clBBprogeny} and~\ref{clpieces}, we now
deduce (\ref{eqetaapprox}). This implies (\ref{eqlevels}) and
completes the proof of Theorem~\ref{Tlevels-volume}.
\end{pf*}
\begin{pf*}{Proof of Theorem~\ref{Tlevels}}
The basis of the proof is to express the limiting quantity in (\ref
{eqlevels}) as a sum of
independent contributions corresponding to distinct excursions of
$Y-\underline{Y}$. Conditionally on the
$L$-process, these contributions will be independent exponential random
variables, with parameters arising from certain excursion measures.

From (\ref{eqlevels}), the corollary will be proved if we manage to
express $\frac{\gamma}{4} l_{\infty}^a(H)$ as the right-hand side of
(\ref{eqsumofexp}). Note\vadjust{\goodbreak} that, if $l_t^x(\frac{\sqrt{\gamma
}}{2}H)$ denotes the
local time up to time $t$ at level $x$ of
\[
\frac{\sqrt{\gamma}}{2} H = Y_t -
\frac{3}{2} \underline{Y}_t,
\]
then
\[
\frac{\gamma}{4} l_{t}^a(H) = \frac{\sqrt{\gamma}}{2} l_t^{
{\sqrt
{\gamma}a}/{2} }\biggl(
\frac{\sqrt{\gamma}}{2} H \biggr),
\]
so that we may as well express $\frac{\sqrt{\gamma}}{2} l_t^{
{\sqrt
{\gamma}a}/{2} }(
\frac{\sqrt{\gamma}}{2} H )$.

To reach this goal, it is convenient to decompose the path of
$ \frac{\sqrt{\gamma}}{2} H $ according to the excursions above the
origin of
$Y-\underline{Y}$. Let us introduce some notation. We let
$\cF(\rr_+,\rr)$ denote the space of real-valued finite
paths, so that excursions of $Y$ and of $Y- \underline{Y}$ are
elements of $\cF(\rr_+,\rr)$. For a path $e \in
\cF(\rr_+,\rr)$, we define $\overline{e}:= \sup_{s \ge0}
e(s)$, $\underline{e}:= \inf_{s \ge0} e(s)$. For $c \ge0$, we
let $N^{(-c)}$ denote the excursion measure of drifted Brownian
motion with drift $-c$ away from the origin, and $n^{(-c)}$ that of
reflected drifted Brownian motion with drift $-c$ above the origin
(see, e.g.,~\cite{RogWill1994v2}, Chapter VI.8).
%
\begin{lemma} \label{LncNc}
For any $c>0$, $a>0$, we have
%
\begin{eqnarray}
\label{eqnc}
n^{(-c)}(\overline{e} >a ) &=& \frac{2c}{\exp(2ca)-1}, \\
\label{eqNc}
N^{(-c)}(\underline{e} < -a) &=&
\frac{c}{1-\exp(-2ca)}.
\end{eqnarray}
For $c=0$ we have $n^{(0)}(\overline{e}>a)=a^{-1}$,
$N^{(0)}(\underline
{e}<-a)=(2a)^{-1}$.
\end{lemma}

This result is well known and can be proven by using basic properties
of drifted
Brownian motion and excursion measures.

We are now going to determine the excursions of $Y-\underline{Y}$ which
give a nonzero contribution to $\frac{\gamma}{4}l_{\infty}^a(H)$.
We may and will choose $-\underline{Y}$ to be the local time
process at $0$ of $Y-\underline{Y}$. Using excursion theory (see,
e.g.,~\cite{RogWill1994v2}, Section~VI.8.55),
we know that for this
normalization of local time, conditionally on the $L$-process, the
excursions of $Y-\underline{Y}$ form an inhomogeneous Poisson point
process $\mathfrak{P}$ in the space $\rr_+ \times
\cF(\rr_+,\rr_+)$ with intensity $ ds \times
n^{(-L(s))}$.

For $b \ge0$, let $\tau_{b}$ denote the hitting
time of $b$ by $-Y$. Note that for any $s>\tau_b$,
$-\underline{Y}_s > b$, from the fact that drifted Brownian motion
started at $0$ instantaneously visits the negative
half line. We therefore observe that the last visit to $\frac{\sqrt
{\gamma}}{2} a$ by $\frac{\sqrt{\gamma}}{2} H $ is at time
$\tau_{a\sqrt{\gamma}}$. Hence, any point of $\mathfrak{P}$ whose first
coordinate is larger than $a\sqrt{\gamma}$ corresponds to a part of the
path of $H$ which lies strictly above $a$, and therefore cannot
contribute to $l_{\infty}^a(H)$. Moreover, a part of the path of
$\frac{\sqrt{\gamma}}{2} H$ which corresponds to an excursion of
$Y-\underline{Y}$
starting at a time $s < \tau_{a\sqrt{\gamma}}$ will only reach height
$\frac{\sqrt{\gamma}}{2} a$ whenever the supremum of this excursion is\vadjust{\goodbreak}
greater or
equal than $\frac{1}{2}( a\sqrt{\gamma} - \underline{Y}_s)$. Therefore,
any excursion
of $Y-\underline{Y}$ which gives a nonzero contribution to
$l_{\infty}^a(H)$ corresponds to a point of $\mathfrak{P}$ whose
first coordinate is some $s$ such that $s \le a\sqrt{\gamma}$ and whose
second coordinate is an excursion $e$ such that $\overline{e} \ge
\frac{1}{2}(a\sqrt{\gamma} - s)$.

These
considerations, along with properties of Poisson point processes, lead
to the following claim.\vspace*{-2pt}
%
\begin{claim} \label{clcontributing}
Conditionally on the $L$-process, the excursions of
$Y-\underline{Y}$ which give a nonzero\vspace*{1pt} contribution to
$\frac{\gamma}{4}l_{\infty}^a(H) = \frac{\sqrt{\gamma}}{2}
l_{\infty}^{{\sqrt{\gamma}a}/{2}}(
\frac{\sqrt{\gamma}}{2} H )$ are points of a Poisson point
process $\cP\subset\mathfrak{P}$ on $\rr_+ \times
\cF(\rr_+,\rr_+)$ with intensity
\[
\mathbf{1}_{[0,a\sqrt{\gamma}]}(s) \mathbf{1}
\bigl(\overline
{e}\geq\tfrac{1}{2}\bigl(a\sqrt{\gamma}-s\bigr)\bigr)\,ds\times
n^{-L(s)}(\cdot).\vspace*{-2pt}
\]
\end{claim}

The number of points of $\cP$ clearly is almost surely countable,
so we may write
$\cP= (s_i, e_i)_{i \in\zz_+}$. In particular, by
(\ref{eqnc}), $(s_i)_{i \in\zz_+}$ are the points of the Poisson
point process on $[0,a\sqrt{\gamma}]$ introduced in Theorem~\ref{Tlevels}.

Note that $\{e_i, i\in\zz_+\}$ correspond obviously to distinct
excursions of $Y-\underline{Y}$, so that their contributions to
$l_{\infty}^{{\sqrt{\gamma}a}/{2}}(
\frac{\sqrt{\gamma}}{2} H )$ are independent.\vspace*{-2pt}
%
\begin{claim} \label{clcontributions}
Conditionally given $L$, for each $i \in\zz_+$ the contribution of the
excursion $e_i$ to
$l_{\infty}^{{\sqrt{\gamma}a}/{2}}(
\frac{\sqrt{\gamma}}{2} H )$ is
exponentially distributed with parameter
\[
N^{(-L(s_i))}\bigl(\underline{e_i} \le\tfrac{1}{2}\bigl(-a\sqrt{\gamma} +
s_i\bigr) \bigr).\vspace*{-2pt}
\]
\end{claim}

Fix $i \in\zz_+$, and condition on $L$. Recall that $(s_i,e_i)$ is
one of the points of the Poisson process $\cP$, so that
$e_i$ is chosen according to the measure
\[
n^{(-L(s_i))}\bigl(  \cdot
,\overline{e} > \tfrac{1}{2}\bigl(a\sqrt{\gamma} - s_i\bigr)\bigr).
\]
Up to the time at which
$e_i$ reaches $\frac{1}{2}(a\sqrt{\gamma} -s_i)$, $e_i$ does not
contribute to
$l_{\infty}^{{\sqrt{\gamma}}a/{2}}(
\frac{\sqrt{\gamma}}{2} H )$. From the Markov property of $e$
under the restricted measure
$n^{(-L(s_i))}(  \cdot,\overline{e} > \frac{1}{2}(a\sqrt
{\gamma
} - s_i))$, the
remaining part of $e_i$ [after it has reached $\frac{1}{2}(a\sqrt
{\gamma
} -s_i$)]
follows the path of a drifted Brownian motion, with drift $-L(s_i)$,
started at $\frac{1}{2}(a\sqrt{\gamma} -s_i)$, and stopped when it gets
to the
origin. Thus, the contribution of $e_i$ to
$l_{\infty}^{{\sqrt{\gamma}a}/{2}}(
\frac{\sqrt{\gamma}}{2} H )$ is exactly the local time of this
stopped drifted Brownian motion at level $\frac{1}{2}(a\sqrt{\gamma}
-s_i)$. By
shifting vertically, it is also $l_{\infty}^0(X)$, the total local
time at the origin of $X$, a drifted Brownian motion, with drift
$-L(s_i)$, started at the origin and stopped when reaching
$\frac{1}{2}(-a\sqrt{\gamma} +s_i)$. By excursion theory, if $\widetilde
{\mathfrak{P}}_i$
is a Poisson point process on $\rr_+ \times\cF(\rr_+,\rr)$
with intensity $ds \times N^{(-L(s_i))}$, then $l_{\infty}^0(X)$ is
the coordinate of the first point of $\widetilde{\mathfrak{P}}_i$ which
falls into the set
\[
\rr_+ \times\bigl\{ e \in
\cF(\rr_+,\rr)\dvtx \underline{e} < \tfrac{1}{2}\bigl(-a\sqrt{\gamma} +s_i\bigr)
\bigr\}.
\]
Claim~\ref{clcontributions} follows.

From Lemma~\ref{LncNc}, Claim~\ref{clcontributing} (along with the
remark which follows it) and Claim~\ref{clcontributions}, we deduce
Theorem~\ref{Tlevels}.\vadjust{\goodbreak}
\end{pf*}



\printaddresses

\end{document}